\newif\ifdraft
\def\@wraptoccontribs#1#2{}
    \newcommand{\sidenote}[1]{\marginpar[\raggedleft\tiny #1]{\raggedright\tiny #1}}
    \newcommand{\sidenote}[1]{}
\newcommand{\warning}[1]{\typeout{}\typeout{WARNING: #1 at line \the\inputlineno}\typeout{}}
\newenvironment{note}[1][TODO. ]{%
    \ifdraft\else\warning{Note environment still present in final version}\fi
    \MakeFramed{\advance\hsize-\width \FrameRestore}\noindent\textbf{#1}}%
    {\endMakeFramed}
\newcommand{\UWave}[2][blue]{\bgroup \markoverwith{\textcolor{#1}{\lower3.5\p@\hbox{\sixly \char58}}}\ULon{#2}}
\newcommand{\SOut}[2][red]{\bgroup\markoverwith {\textcolor{#1}{\rule[.45ex]{2pt}{.1ex}}}\ULon{#2}}
\newcommand{\highlight}[2][yellow]{\bgroup\markoverwith {\textcolor{#1}{\rule[-.2em]{2pt}{1.2em}}}\ULon{#2}}
\newenvironment{beqn}[1][:C?s]%
    {\left\{\begin{IEEEeqnarraybox}[\relax][c]{#1}}%
    {\end{IEEEeqnarraybox}\right.}
\newcommand{\del}{\partial}
\newcommand{\delt}{\prt_t}
\newcommand{\lap}{\Delta}
\newcommand{\grad}{\nabla}
\newcommand{\divergence}{\dv}
\newcommand{\cross}{\times}
\renewcommand{\epsilon}{\varepsilon}
\renewcommand{\leq}{\leqslant}
\renewcommand{\geq}{\geqslant}
\newcommand{\R}{\mathbb{R}}
\newcommand{\N}{\mathbb{N}}
\newif\iftextstyle
\everydisplay\expandafter{\the\everydisplay\textstylefalse}
\DeclarePairedDelimiter{\abs}{\lvert}{\rvert}
\DeclarePairedDelimiter{\norm}{\lVert}{\rVert}
\DeclarePairedDelimiter{\average}{\langle}{\rangle}
\newcommand{\dualpairing}[2]{\average{#1,#2}}
\newcommand{\ip}[2]{(#1,#2)}
\newcommand{\defeq}{\stackrel{\scriptscriptstyle \text{def}}{=}}
\numberwithin{equation}{section}
\newtheorem{theorem}{Theorem}[section]
\newtheorem{lemma}[theorem]{Lemma}
\newtheorem{proposition}[theorem]{Proposition}
\newtheorem{corollary}[theorem]{Corollary}
\newtheorem*{theorem*}{Theorem}
\newtheorem*{lemma*}{Lemma}
\newtheorem*{proposition*}{Proposition}
\newtheorem*{corollary*}{Corollary}
\theoremstyle{definition}
\newtheorem{definition}[theorem]{Definition}
\theoremstyle{remark}
\newtheorem{remark}[theorem]{Remark}
\newtheorem*{remark*}{Remark}
\newtheoremstyle{cases}% name
  {\smallskipamount}%      Space above, empty = `usual value'
  {\smallskipamount}%      Space below
  {}% Body font
  {\parindent}%         Indent amount (empty = no indent, \parindent = para indent)
  {\itshape}% Thm head font
  {.}%        Punctuation after thm head
  {.5em}%     Space after thm head: " " = normal interword space;
\newcounter{CaseCounter}
\theoremstyle{cases}
\newtheorem{case}[CaseCounter]{Case}%TODO: Write a better environment. This looks crappy.
\newcommand{\eqsysSR}{\eqref{eqnSRu}--\eqref{eqnSRC}\xspace}
\newcommand{\eqsysWeakS}{\eqref{eqnWeakSu}--\eqref{eqnWeakSdivergence}\xspace}
\newcommand{\Ignore}[1]{}
\newcommand{\iny}{\ensuremath{\infty}}
\newcommand{\dv}{\grad\cdot} %
\newcommand{\prt}{\ensuremath{\partial}}
\newcommand{\brac}[1]{\ensuremath{\left[ #1 \right]}}
\newcommand{\pr}[1]{\ensuremath{\left( #1 \right) }}
\newcommand{\set}[1]{\ensuremath{\left\{ #1 \right\}}}
\newcommand{\refS}[1]{Section~\ref{sxn#1}}
\newcommand{\refT}[1]{Theorem~\ref{thm#1}}
\newcommand{\refL}[1]{Lemma~\ref{lma#1}}
\newcommand{\refD}[1]{Definition~\ref{dfn#1}}
\newcommand{\refE}[1]{\eqref{eqn#1}}
\newcommand{\refEAnd}[2]{\eqref{eqn#1} and \eqref{eqn#2}}
\newcommand{\refR}[1]{Remark~\ref{rmk#1}}
\newcommand{\Cal}[1]{\ensuremath{\mathcal{#1}}}
\newcommand{\la}{\ensuremath{\lambda}}
\newcommand{\ol}{\overline}
\newcommand{\LP}{\ensuremath{\Cal{P}}} %
\newcommand{\lhp}{\LP} %
\newcommand{\n}{\bm{n}}
\newcommand{\LTwon}{L^2_{\n}}
\newcommand{\dt}{\ensuremath{\frac{d}{dt}}}
\newcommand{\smallnorm}[1]{\ensuremath{\Vert #1 \Vert}}
\newcommand{\diff}[2]{\frac{ d#1}{d#2}}
\newcommand{\innp}[1]{\ensuremath{\left< #1 \right>}}
\newcommand{\extended}{extended\xspace}
\begin{document}
%{{{1 Title junk
\ifdraft
	\title
		[\currfilename \qquad \mmddyyyydate\today \quad\currenttime]
		{Global existence for two extended Navier-Stokes systems}
\else
	\title
		[Global existence for extended Navier-Stokes systems]
		{Global existence for two extended Navier-Stokes systems}
\fi

\author[Ignatova]{Mihaela Ignatova}
\address{Department of Mathematics, Stanford University, Stanford, CA 94305}
\email{mihaelai@stanford.edu}

\author[Iyer]{Gautam Iyer}
\address{Department of Mathematical Sciences, Carnegie Mellon University, Pittsburgh PA 15213}
\email{gautam@math.cmu.edu}

\author[Kelliher]{James P. Kelliher}
\address{Department of Mathematics, University of California, Riverside, 900 University Ave., Riverside, CA 92521}
\email{kelliher@math.ucr.edu}

\author[Pego]{Robert L. Pego}
\address{Department of Mathematical Sciences, Carnegie Mellon University, Pittsburgh PA 15213}
\email{rpego@math.cmu.edu}

\author[Zarnescu]{Arghir D. Zarnescu}
\address{Department of Mathematics, University of Sussex, Falmer, Brighton, BN1 9QH,
United Kingdom}
\email{A.Zarnescu@sussex.ac.uk}

% Please add your grant numbers here if applicable.
\thanks{This material is based upon work partially supported by the National Science Foundation under grants
DMS-0905723,	 % Bob
DMS-1007914, % Gautam
DMS-1158938, % Mihaela
DMS-1212141, % Jim
DMS-1211161, % Bob
DMS-1252912. % Gautam
GI also acknowledges partial support from an Alfred P. Sloan research fellowship.
The authors also thank the Center for Nonlinear Analysis (NSF Grants No. DMS-0405343 and DMS-0635983), where part of this research was carried out.}

\begin{abstract}%{{{
	We prove global existence of weak solutions to two systems of equations which extend the dynamics of the Navier-Stokes equations for incompressible viscous flow with no-slip boundary condition.
	The systems of equations we consider arise as formal limits of time discrete pressure-Poisson schemes introduced by Johnston \& Liu 
(J.\ Comp.\ Phys.\ 199 (2004) 221--259)
	% Thu 08/09/12 GI: Journals usually ask references in the abstract to be spelt out
	%\cite{bblJohnstonLiu} and~\cite{bblShirokoffRosales} [Johnston, Liu; JCP '04] 
and by Shirokoff \& Rosales 
(J.\ Comp.\ Phys.\ 230 (2011) 8619--8646)
%in [Shirokoff, Rosales; JCP '11]
	when the initial data does not satisfy the required compatibility condition.
	Unlike the results of Iyer {\it et al}\ (J.\ Math.\ Phys.\ 53 (2012) 115605),
%[Iyer, Pego, Zarnescu; JMP '12], 
our approach proves existence
	of \emph{weak} solutions in domains with less than $C^1$ regularity.
	Our approach also addresses uniqueness in 2D and higher regularity.
\end{abstract}%}}}

\subjclass[2000]{76D05, 35Q30, 65M06, 76M25} % ; Secondary }
\keywords{Navier-Stokes, numerics, global well-posedness}

\maketitle

%--- Table of contents followed by a page break is useful while editing,
%--- especially with active links
\ifdraft
	\begin{center}
		*** Table of contents only in draft mode, which is useful while editing ***.
	\end{center}

	\tableofcontents
	\newpage
\fi

%
% Section
%
\section{Introduction}

The pressure in incompressible fluids is a recurring source of difficulty in
theory and
numerics alike.
Formally, the pressure is a Lagrange multiplier that preserves incompressibility, and when studying well-posedness it is usually ``projected out'' and eventually recovered from the velocity field using the incompressibility constraint (see~\cite{bblLeray}).
%yet, the pressure gradient has physical meaning force per unit area.
%and propagates with infinite speed.
This paper studies two systems of equations that extend the incompressible Navier-Stokes equations by specifying an explicit equation for the pressure.
Both systems arose in a numerical context (see~\cites{bblJohnstonLiu,bblShirokoffRosales}) in order to propose a time-discrete pressure-Poisson scheme (see~\cite{bblHarlowWelch}) aiming to be both efficient and accurate in domains with boundary.

For clarity of presentation, we focus this introduction on the system in~\cite{bblJohnstonLiu}, deferring the discussion of the system in~\cite{bblShirokoffRosales} to Section~\ref{sxnSR}.
The formal time-continuous limit of the time-discrete scheme proposed in~\cite{bblJohnstonLiu} turns out to be 
%The extended dynamics arising from the scheme in~\cite{bblJohnstonLiu} is
one of several ``reduced'' models studied by Grubb and Solonnikov~\cites{bblGrubbSolonnikov2,bblGrubbSolonnikov3}.
Explicitly, the system of interest is
\begin{align}
	\label{eqnENS1}
		\prt_t u + \LP[ (u \cdot \grad) u ] - \nu \lap u + \nu \grad p_s &= \LP f
			&& \text{in }\Omega,\\
	\label{eqnENS1bc}
		u &= 0 && \text{on } \del \Omega,	 \\
	\label{eqnENS1id}
		u \bigr|_{t = 0} &= u_0
			&& \text{in } \Omega,
\end{align}
where $p_s$, the \textit{Stokes pressure}, satisfies
\begin{equation}\label{eqnStokesPressure}
	\begin{beqn}
		-\lap p_s = 0 & in $\Omega$,\\
		\n \cdot \grad p_s = (\lap u - \grad \dv u) \cdot \n &on $\prt \Omega$.
	\end{beqn}
\end{equation}
Here $\nu > 0$ is the viscosity, $\LP$ is the Leray projection, and $f$ is the (given) external forcing.
As shown in~\cite{bblLiuLiuPego}, the Stokes pressure is alternatively represented by the formulae
\begin{equation}\label{eqnStokesPressure2}
\grad p_s = (I-\LP)(\lap u - \grad\dv u) = (\lap\LP-\LP\lap)u.
\end{equation}
%Using~\eqref{eqnStokesPressure}, 
Thus we can recast the  equation~\eqref{eqnENS1} as
\begin{gather}\label{eqnENSSimpleForm}\tag{\ref{eqnENS1}$'$}
		\prt_t u + \LP (u \cdot \grad u - f) = \nu(\LP \Delta u + \grad \dv u)
			\qquad \text{in }\Omega.
\end{gather}
Note the presence of the extra, stabilizing term, $\grad \dv u$, appearing on the right. (For more details see \cite{bblLiuLiuPego}.)

When the compatibility condition $\dv u_0 = 0$ is imposed on the initial data, the above system reduces exactly to the incompressible Navier-Stokes equations.
%This system extends the dynamics of the (standard) incompressible Navier-Stokes equations in the sense that when the initial data is divergence-free the system
To see this, note that $\divergence u$ satisfies the heat equation with Neumann boundary conditions:
\begin{equation}\label{eqndvuHeat}
  \begin{beqn}
      \prt_t \dv u = \nu \Delta \dv u & in $\Omega$, \\
      \n \cdot \grad \dv u = 0 & on $\prt\Omega$, \\
      \dv u\bigr|_{t = 0} = \dv u_0 & \text{in } $\Omega$.
  \end{beqn}
\end{equation}
Here $\n$ denotes the outward-pointing unit normal on $\del \Omega$.
The evolution equation~\eqref{eqndvuHeat}\textsubscript{1} and initial condition~\eqref{eqndvuHeat}\textsubscript{3} follow by taking the divergence of~\eqref{eqnENSSimpleForm} and \eqref{eqnENS1id} respectively.
The boundary condition~\eqref{eqndvuHeat}\textsubscript{2} follows by taking the normal trace of~\eqref{eqnENSSimpleForm} on $\del \Omega$ and using~\eqref{eqnENS1bc}.

Uniqueness for~\eqref{eqndvuHeat} now implies that if $\divergence u_0 \equiv 0$ then $\divergence u \equiv 0$, showing that \eqref{eqnENSSimpleForm} exactly reduces to the standard, incompressible Navier-Stokes equations as claimed.
%In this sense we say~\eqref{eqnENSSimpleForm} extends the dynamics of the Navier-Stokes equations to compressible vector fields.
On the other hand, if~$\divergence u_0$ is non-zero initially then~\eqref{eqnENSSimpleForm} extends the dynamics of the incompressible Navier-Stokes equations in a manner that damps the
divergence
%compressible component
exponentially in time.

%This paper focusses on existence, regularity, and well-posedness of the formal time-continuous limit of the time-discrete schemes proposed in~\cite{bblJohnstonLiu} and~\cite{bblShirokoffRosales}.
%For both schemes, if a compatibility condition is imposed on the initial data, the formal time continuous limit is exactly the incompressible Navier-Stokes equations.
%Without these compatibility conditions, however, one obtains dynamics that ``extend'' those of the Navier-Stokes equations, and it is this extended dynamics that we analyze in this paper.

This paper primarily deals with global existence of solutions to~\eqref{eqnENSSimpleForm}, \eqref{eqnENS1bc}, \eqref{eqnENS1id}.
Local well-po\-sed\-ness for strong solutions was proved by Grubb and Solonnikov~\cites{bblGrubbSolonnikov2,bblGrubbSolonnikov3} based on a theory of pseudo-differential initial-boundary value problems, and more recently in~\cite{bblLiuLiuPego} using a novel commutator estimate.
The techniques used in~\cite{bblLiuLiuPego} were extended in~\cite{bblIyerPegoZarnescu} to also prove some (conditional) global existence results.
The results available so far, however, all assumed a regular ($C^3$) domain and only concerned strong solutions.

The difficulty in proving the existence of weak solutions to~\eqref{eqnENSSimpleForm} arises, surprisingly, from the linear terms.
To elaborate on this, observe that when we multiply~\eqref{eqnENSSimpleForm} by $u$ and integrate, the linear terms give
\begin{multline}\label{eqnCoercivityFail}
	-\int_{\Omega} u \cdot \bigl( \lhp \lap u + \grad \divergence u \bigr)
		= -\int_{\Omega} \lhp u \cdot \lap \lhp u + \norm{\divergence u}_{L^2}^2\\
		= \norm{\grad \lhp u}_{L^2}^2 + \norm{\dv u}_{L^2}^2 + \int_{\del \Omega} (\lhp u) \cdot \frac{\del \lhp u}{\del \n}.
\end{multline}
% (already defined) where $\n$ is the outward pointing unit normal vector to the boundary.
For the standard incompressible Navier-Stokes equations, the no-slip boundary condition and the incompressibility constraint together guarantee $\lhp u = 0$ on $\del \Omega$, ensuring the boundary integral on the right of~\eqref{eqnCoercivityFail} vanishes.
In our situation, however, the boundary integral above provides a
%catastrophic
high-order contribution without consistent sign.
Thus, while
%$-\lap + \grad p_s$
$-(\LP \lap + \grad \divergence)$
is a positive coercive operator on the space of \emph{divergence-free} $H^1_0$ functions, it is neither positive nor coercive with respect to the standard inner product on $H^1_0$ (see~\cite{bblIyerPegoZarnescu}*{Proposition 2.1}).

The key idea in~\cite{bblIyerPegoZarnescu} is to use a commutator estimate to construct a nonstandard $H^1$-equivalent inner product that makes $-(\LP \lap + \grad \divergence)$ coercive on the whole space $H^1_0$, and use this to prove conditional global existence results.
Unfortunately, under these inner products, the nonlinearity is harder to control; consequently, the results in~\cite{bblIyerPegoZarnescu} are unable to effectively exploit the depletion of the nonlinearity that is available under the standard $L^2$ inner product.
For the standard Navier-Stokes equations the $L^2$ depletion of the nonlinearity is responsible for the energy inequality, which is central to almost every global existence result available.

This paper uses a different approach to study well-posedness of~\eqref{eqnENSSimpleForm}.
The central idea is to consider the $H^1_0$ (not $L^2$!) orthogonal decomposition of the solution $u$ into a divergence free part and a remainder.
Namely, we write 
\[
u = v + z, \qquad\mbox{where}\quad \divergence{v}=0, \quad  \int_{\Omega} \grad v \cdot \grad z = 0,
\]
and $v, z \in H^1_0$.
%\sidenote{Mon 05/21/12 GI: Notation $\ip{\cdot}{\cdot}$ used before it's defined.}
The key observation in our proof is that $z$ is completely determined by $\divergence u$, a quantity that is globally determined only from $\divergence u_0$ via~\eqref{eqndvuHeat} and is independent of $v$.
Now the evolution of $v$ is essentially a perturbed Navier-Stokes equation, which can be analyzed using well-established techniques.
In domains with Lipschitz boundaries, however, one has to tread cautiously.

The advantage of the method used in this paper is that it proves the existence of global weak solutions of~\eqref{eqnENSSimpleForm}, \eqref{eqnENS1bc}, \eqref{eqnENS1id} even in irregular (Lipschitz) domains.
The methods in~\cites{bblIyerPegoZarnescu,bblLiuLiuPego} prove the existence of strong solutions, and require $H^1_0$ initial data and a $C^3$-domain.
In this paper, we prove existence of weak solutions with $L^2$ initial data and \emph{either} $H^2$ initial divergence and a Lipschitz domain \emph{or} $L^2$ initial divergence and a $C^2$ domain.
The interest in studying weak solutions and lowering the regularity requirements of the domain is that numerical simulations are often performed in piecewise smooth (often polygonal) domains.
Such domains can't be handled using the techniques in~\cites{bblIyerPegoZarnescu,bblLiuLiuPego} but can be handled using our approach.

We remark, however, that our method does not appear to help in the analysis of stability and convergence of the time-discrete schemes in~\cites{bblJohnstonLiu,bblShirokoffRosales}.
%[3/9/13 LiuLiuPego - Chinese Annals may be an attempt in this direction.]
%the primary motivation for studying~\eqref{eqnNSB}.
In contrast, both~\cites{bblIyerPegoZarnescu} and~\cite{bblLiuLiuPego} (see also~\cite{bblLiuLiuPego09}) prove a stability result for an associated time-discrete scheme.

%--- If we number this subsection there will be a Section 1.1 without 1.2, a no-no
\subsection*{Plan of this paper.}%{{{2
% \sidenote{Tue 08/07/12 GI: Should we add a main results section, stating all the results? Usually I prefer doing this, but for this paper I'm not so sure.}
% This paper is organized as follows.
We begin in \refS{Decomposition} with the development of our two main tools, an orthogonal decomposition of vector fields in $H_0^1$ and a lifting from a given divergence to a vector field in $H_0^1$.
In \refS{Weak} we establish the existence and, in 2D, uniqueness of weak solutions to the \extended Navier-Stokes equations of \refE{ENSSimpleForm}, giving the higher regularity theory in \refS{Strong}.
In \refS{SR} we analyze another system of equations introduced by Johnston and Liu in \cite{bblJohnstonLiu} and studied by Shirokoff and Rosales in \cite{bblShirokoffRosales}. This system also extends the Navier-Stokes equations, though in a manner different from the extended equations in \refE{ENSSimpleForm}. We show how they can nonetheless be treated using the same key tools of \refS{Decomposition}. In the appendix we summarize the key facts we use regarding the regularity of solutions to the heat equations in Lipschitz domains.

%
% Section
%
\section{An \texorpdfstring{$H^1_0$}{H10} orthogonal decomposition and divergence lifting}\label{sxnDecomposition}%{{{1

As mentioned earlier, our main tool to obtain a~priori estimates for~\eqref{eqnENSSimpleForm} is to split $u$ into a divergence-free field $v\in H^1_0$, and its $H^1_0$-orthogonal remainder.
In this section we describe this decomposition and a few well-known properties of it.

\iffalse%{{{ Thu 08/09/12 GI: Incorporated notation to where it's first used, and commented this out
\subsection{Notation and convention}
Before describing the $H^1_0$-orthogonal decomposition, we briefly describe the notational conventions we use throughout this paper.

 By $L^2(\Omega), H^k(\Omega),k\in\R$ we mean either scalar or vector-valued functions, and it will be clear from the context what is the right target space. For $u \in L^2(\Omega)$ we  let $\norm{u}_{L^2}$ be its norm.
 We denote by $(\cdot,\cdot)$ the $L^2$ inner product, and by $\langle\cdot,\cdot\rangle$ the duality between $V'$ and $V$.
\fi%}}}

\subsection*{The \texorpdfstring{$H^1_0$}{H10}-orthogonal decomposition.}\label{sxnH10decomp}%{{{2
Let $\Omega$ be a bounded, connected domain in $\R^d$, $d \geq 2$, whose boundary has at least Lipschitz regularity, and let $\n$ be the outward-pointing unit vector normal to the boundary.
Let $H$ and $V$ denote the usual functions spaces (see for instance~\cite{bblConstFoias}),
\begin{align}
	\label{eqnH}
	H &= \set{v \in L^2(\Omega) \colon \dv u = 0 \text{ and } u \cdot \n = 0 \text{ on } \prt \Omega},\\
	\label{eqnV}
	\llap{\qquad} V &= \set{v \in H_0^1(\Omega) \colon \dv u = 0}.
\end{align}
%In the definition of $H$, $u \cdot \n$ is the trace of the normal component of $u$, which, as is well-known, is well-defined since $u$ is divergence-free and the boundary is Lipschitz.

Leray's well known orthogonal projection, $\LP\colon L^2 \to H$, can be explicitly computed by solving Poisson problems.
Perhaps less familiar is the orthogonal projection of $H^1_0(\Omega)$ into $V$, which can be explicitly obtained by solving a stationary Stokes equation. For this purpose, we use the following lemma:
\begin{lemma}\label{lmaGaldi}
Let $\Omega \subset \R^d$, $d = 2, 3$, be a bounded, connected Lipschitz domain, $f \in H^{-1}(\Omega)$, $g \in L^2(\Omega)$, and $h \in H^{1/2}(\prt \Omega)$. Assume that the compatibility condition,
\begin{align*}
	\int_\Omega g = \int_{\prt \Omega} h \cdot \n
\end{align*}
holds. Then there exists a  solution $(z, q)$ in $(H^1(\Omega), L^2(\Omega))$ to
\begin{align}\label{eqnGaldiEq}
	\begin{beqn}
		-\Delta z + \grad q = f & in $\Omega$,\\
		\divergence z = g & in $\Omega$,\\
		z = h & on $\prt \Omega$.
	\end{beqn}
\end{align}
The vector field $z$ is unique and $q$ is unique up to an additive constant.
Moreover, normalizing $q$ to have mean zero,
\begin{align*}
	\norm{z}_{H^1} + \norm{q}_{L^2}
		\leq C \pr{
			\norm{f}_{H^{-1}} + \norm{g}_{L^2} + \norm{h}_{H^{1/2}(\prt \Omega)}
			}.
\end{align*}
\end{lemma}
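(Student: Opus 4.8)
The plan is to reduce the inhomogeneous Stokes problem to a homogeneous one by two successive liftings, solve the reduced problem on the space $V$ of divergence-free $H^1_0$ fields via the Lax--Milgram theorem, and then recover the pressure $q$ via de Rham's theorem together with the Ne\v{c}as inequality. This is the classical route (as in Galdi's monograph); the only points requiring care are that each of the three domain-dependent ingredients---the trace lifting, a bounded right inverse of the divergence, and the inf-sup/Ne\v{c}as inequality---survives the drop to merely Lipschitz regularity.

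First I would dispose of the boundary data. By the trace theorem on Lipschitz domains there is $w_1 \in H^1(\Omega)$ with $w_1 = h$ on $\partial\Omega$ and $\norm{w_1}_{H^1} \leq C\norm{h}_{H^{1/2}(\partial\Omega)}$. Next I would correct the divergence: set $\tilde g = g - \dv w_1 \in L^2(\Omega)$ and observe, using the compatibility hypothesis and the divergence theorem, that $\int_\Omega \tilde g = \int_\Omega g - \int_\Omega \dv w_1 = \int_\Omega g - \int_{\partial\Omega} w_1\cdot\n = \int_\Omega g - \int_{\partial\Omega} h\cdot\n = 0$. Because $\int_\Omega \tilde g = 0$, the Bogovski\u{\i} construction (valid on bounded Lipschitz domains) furnishes $w_2 \in H^1_0(\Omega)$ with $\dv w_2 = \tilde g$ and $\norm{w_2}_{H^1}\leq C\norm{\tilde g}_{L^2}\leq C(\norm{g}_{L^2}+\norm{h}_{H^{1/2}})$. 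Putting $w = w_1 + w_2$, we get $w = h$ on $\partial\Omega$, $\dv w = g$, and $\norm{w}_{H^1}\leq C(\norm{g}_{L^2}+\norm{h}_{H^{1/2}})$. Writing $z = w + \zeta$, the new unknown $\zeta$ must lie in $V$, since it is divergence-free and vanishes on the boundary.

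It remains to find $\zeta\in V$ and $q$. Testing the momentum equation against $\phi\in V$ annihilates the pressure term, because $\int_\Omega \grad q\cdot\phi = -\int_\Omega q\,\dv\phi = 0$, so $\zeta$ should satisfy $\int_\Omega \grad\zeta:\grad\phi = \langle f,\phi\rangle - \int_\Omega\grad w:\grad\phi$ for all $\phi\in V$. The left-hand side is a bounded, coercive, symmetric bilinear form on $V$ (coercivity from the Poincar\'e inequality), and the right-hand side is a bounded functional of $\phi$, so Lax--Milgram yields a unique $\zeta\in V$ with $\norm{\zeta}_{H^1}\leq C(\norm{f}_{H^{-1}}+\norm{w}_{H^1})$; together with the bound on $w$ and $z = w+\zeta$ this gives the claimed estimate for $z$. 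To produce the pressure, note that $F(\phi) := \langle f,\phi\rangle - \int_\Omega\grad z:\grad\phi$ defines a bounded functional on $H^1_0(\Omega)$ that annihilates $V$; by de Rham's theorem (equivalently, the fact that the annihilator of $V$ in $H^{-1}$ is exactly $\grad(L^2)$) there is $q\in L^2(\Omega)$, unique up to an additive constant, with $F(\phi) = -\int_\Omega q\,\dv\phi$ for all $\phi\in H^1_0(\Omega)$, which is precisely the weak form of $-\lap z + \grad q = f$. Normalizing $q$ to mean zero, the Ne\v{c}as inequality $\norm{q}_{L^2}\leq C\norm{\grad q}_{H^{-1}}$ applied to $\grad q = f + \lap z$ gives $\norm{q}_{L^2}\leq C(\norm{f}_{H^{-1}}+\norm{z}_{H^1})$, completing the estimate.

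Uniqueness is immediate: the difference of two solutions solves the homogeneous system, so the velocity difference lies in $V$, and testing it against itself forces its gradient---hence, by Poincar\'e, the field itself---to vanish; then $\grad q = 0$ makes the pressure difference constant. I expect that none of the functional analysis above is the real obstacle; rather it is verifying that the two genuinely domain-dependent tools persist on Lipschitz domains, most delicately the Ne\v{c}as/inf-sup inequality used to place $q$ in $L^2$ and the $H^1_0$ bound for the Bogovski\u{\i} solution of the divergence equation. Both are known for bounded Lipschitz domains, so in practice I would simply invoke them, which is presumably why the statement is attributed to Galdi.
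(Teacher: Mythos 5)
Your proof is correct. The paper in fact offers no argument of its own for this lemma---its ``proof'' is the single line ``See, for instance, Galdi, Exercise IV.1.1''---and your argument (trace lifting plus a Bogovski\u{\i} correction of the divergence, Lax--Milgram on $V$, de Rham together with the Ne\v{c}as inequality for the pressure, with the uniqueness argument via testing the homogeneous system) is precisely the standard solution to that exercise, including the correct observation that the only genuinely delicate points on a merely Lipschitz domain are the Bogovski\u{\i} operator and the Ne\v{c}as/inf-sup inequality, both of which do hold there.
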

\begin{proof}
	% \JimSideNote{9 Aug 2012 Jim}{I wouldn't say the proof is standard; it relies
	% on some pretty deep and involved stuff. It is perhaps classical, though.}
	% is standard and
	See,
	for instance, \cite{bblGaldi}*{Exercise IV.1.1}.
\end{proof}

Now, given $u \in H^1_0(\Omega)$, define $z$ to be the solution of the system,
\begin{equation}\label{eqnzq}
  \begin{beqn}
    -\Delta z + \grad q = 0 & in $\Omega$,\\
    \divergence z = g & in $\Omega$,\\
    z = 0 & on $\prt \Omega$,
  \end{beqn}
\end{equation}
where $g = \divergence u$, and let $v = u - z$. The existence of $z$ is assured by \refL{Galdi}, since the divergence theorem shows that $g$ satisfies the compatibility condition of that lemma.

We claim that $u = v + z$ gives the $H^1_0$-orthogonal decomposition into $V$ and $V^\perp$.
\refL{Galdi} gives $z \in H_0^1$, so if $u \in H^1_0$ then we must have both $v, z \in H^1_0$.
Since~\eqref{eqnzq} implies $\divergence v = 0$, we must have $v \in V$.
Using~$\ip{\cdot}{\cdot}$ to denote the usual $L^2\iffalse(\Omega)\fi$-inner product, observe that
$$
  \ip{\grad v}{\grad z} = -\ip{v}{\grad q} = 0,
$$
showing orthogonality of $v$ and $z$ in $H^1_0$ as desired.

\subsection*{Divergence lifting}\label{sxnLiftingLemma}%{{{2

Equation~\eqref{eqnzq} explicitly determines the component of $u$ in $V^\perp$ from only $\divergence u$. We view this as a ``lifting'' of the scalar field, $g$, to the vector field, $z$. \refL{Galdi} gave the fundamental existence and regularity result for $z$, but we will have need of results for both higher and lower regularity of $g$. When $g$ is regular enough ($L^2$) and the boundary smooth enough ($C^2$), this is classical~\cites{bblTemam,bblAgmonDouglisNirenberg}.
Under lower regularity assumptions on $g$, a situation we encounter in the proof of our main theorem, we use a duality argument.

\begin{lemma}\label{lmaTemamLemma}%{{{
Let $\Omega \subset \R^d$, $d = 2, 3$, be a bounded, connected Lipschitz domain and $g \in L^2(\Omega)$ have mean zero.
  There exists a unique vector field $z \in H^1_0(\Omega)$ and a unique, mean-zero, scalar function $q \in L^2(\Omega)$ such that  $(z, q)$ solve~\eqref{eqnzq}.
  There exists a constant $c = c(\Omega)$ such that
    \begin{equation}\label{eqnH1boundz}
      \norm{z}_{H^{1}} + \norm{q}_{L^2} \leq c \norm{g}_{L^2}.
  \end{equation}

  If, further, $\Omega$ is $C^2$, then we have
  \begin{equation}\label{eqnweaklifting}
    \norm{z}_{L^2} \leq c \norm{g}_{\tilde H^{-1}},
  \end{equation}
  where $\tilde H^{-1}(\Omega)$ denotes the dual of $H^1(\Omega)$. (Note $\tilde H^{-1}(\Omega) \subsetneq H^{-1}(\Omega)$.)
  
  Moreover, if for some integer $m \geq 0$, $\Omega$ is a $C^{m+2}$ domain and $g \in H^{m+1}(\Omega)$, then $z \in H_0^1 \cap H^{m + 2}(\Omega)$, $q \in H^{m+1}(\Omega))$ and
  \begin{align}\label{eqnADNL2}
    \norm{z}_{H^{m + 2}} + \norm{q}_{H^{m+1}}
      \leq c \norm{g}_{H^{m+1}}.
  \end{align}
\end{lemma}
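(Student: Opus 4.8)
The plan is to obtain the three assertions in increasing order of difficulty, leaning throughout on the Stokes theory already in hand. The existence and uniqueness of $(z,q)$ together with the bound \refE{H1boundz} are immediate from \refL{Galdi}: I would apply it with $f = 0$, $h = 0$ and the given $g$, observing that the mean-zero hypothesis $\int_\Omega g = 0$ is precisely the compatibility condition $\int_\Omega g = \int_{\prt\Omega} h\cdot\n$ when $h = 0$. Normalizing $q$ to mean zero makes it unique. This disposes of the first two claims with no extra work.

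For the weak estimate \refE{weaklifting} I would argue by duality, representing $\norm{z}_{L^2} = \sup_{\norm{\phi}_{L^2}\leq 1}\ip{z}{\phi}$. Fix $\phi \in L^2(\Omega)$ and, using the $C^2$ regularity of $\prt\Omega$, solve the stationary Stokes problem
\[
  -\lap w + \grad r = \phi, \quad \dv w = 0 \ \text{ in } \Omega, \quad w = 0 \ \text{ on } \prt\Omega,
\]
with $r$ normalized to mean zero. Classical Stokes regularity in a $C^2$ domain gives $w \in H^2$, $r\in H^1$ with $\norm{w}_{H^2} + \norm{r}_{H^1} \leq C\norm{\phi}_{L^2}$; this is the sole place the $C^2$ hypothesis is used. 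Pairing $\phi$ with $z$ and integrating by parts, using $z = w = 0$ on $\prt\Omega$ together with $\dv z = g$ and $\dv w = 0$, I would obtain
\[
  \ip{z}{\phi} = \ip{\grad z}{\grad w} - \ip{g}{r}.
\]
Testing the equation \refE{zq} for $z$ against $w$ (here $w$ vanishes on $\prt\Omega$ and is divergence-free, killing both the pressure term $\ip{\grad q}{w}$ and the boundary term) shows $\ip{\grad z}{\grad w} = 0$, whence $\ip{z}{\phi} = -\ip{g}{r}$. Since $g\in L^2\subset\tilde H^{-1}$, I would bound $\abs{\ip{g}{r}} \leq \norm{g}_{\tilde H^{-1}}\norm{r}_{H^1} \leq C\norm{g}_{\tilde H^{-1}}\norm{\phi}_{L^2}$, and taking the supremum over $\norm{\phi}_{L^2}\leq 1$ yields \refE{weaklifting}.

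The higher-regularity claim \refE{ADNL2} is then a direct appeal to the elliptic (Agmon--Douglis--Nirenberg / Temam) regularity theory for the Stokes system: in a $C^{m+2}$ domain, the solution of \refE{zq} with forcing $f = 0 \in H^m$, divergence data $\dv z = g \in H^{m+1}$, and boundary data $h = 0 \in H^{m+3/2}(\prt\Omega)$ gains two derivatives, producing $z \in H^1_0\cap H^{m+2}$, $q\in H^{m+1}$ with the stated estimate. One need only check that the data meet the regularity and compatibility hypotheses of the cited theorems, after which they apply verbatim.

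I expect the duality argument for \refE{weaklifting} to be the only genuinely delicate step. The care there lies in selecting the correct dual Stokes problem so that the two integrations by parts simultaneously eliminate both pressures and all boundary contributions, leaving precisely the pairing $\ip{g}{r}$ that the $\tilde H^{-1}$ norm is designed to control; the remaining parts reduce to bookkeeping around standard Stokes existence and regularity results.
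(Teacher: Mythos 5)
Your proposal is correct and follows essentially the same route as the paper: \refL{Galdi} for existence, uniqueness and \refE{H1boundz}, Temam's Stokes regularity for \refE{ADNL2}, and a duality argument against a dual Stokes problem (with $C^2$ regularity used only for the $H^2$ bound on the dual solution) for \refE{weaklifting}. The only cosmetic difference is that you test against an arbitrary $\phi\in L^2$ and take a supremum, whereas the paper takes the dual forcing to be $z$ itself, so that $\norm{z}_{L^2}^2$ appears directly; the two computations are identical term by term.
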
%}}}
\begin{proof}%{{{
%\JimSideNote{26 July 2012 Jim}{There is one more Remark to reference that is only in the 2001 edition, but I think Mihaela has my copy.}
%
%\JimNote{27 July 2012}{A concern I have is that we don't ever indicate why $z$ has continuity in time. The mapping from $g$ to $z$ is, I suppose, linear, or could be made linear. Is this true? If so, is it enough?}

The existence of a solution to~\eqref{eqnzq} and the bound in \refE{H1boundz} are special cases of \refL{Galdi}.
The bound in \eqref{eqnADNL2} follows from Proposition I.2.3 of \cite{bblTemam}.

To prove~\eqref{eqnweaklifting}, let $v \in H^1_0$, $\tilde q \in L^2$ with $\int_\Omega \tilde q = 0$ solve the Stokes problem,
\begin{equation}\label{eqnvq}
    -\Delta v + \grad \tilde q = z, \quad \dv v = 0.
\end{equation}
Regularity of the Stokes operator~\cite{bblTemam}*{Proposition I.2.3}
% \sidenote{\vspace{8em}Mon 05/28/12 Dani: again used Proposition I.2.3, instead of I.2.2}
gives
\begin{align}\label{eqnvh2}
    \norm{v}_{H^2} + \norm{\grad \widetilde{q}}_{L^2}
	\leq c \norm{z}_{L^2}.
\end{align}
Consequently,
\begin{align*}
  \norm{z}_{L^2}^2
      &= (z,z)
	= (z, -\Delta v + \nabla \tilde q)
	= (\nabla z,\nabla v) - (\dv z, \tilde q) \\
      &= (-\nabla q, v) - (g, \tilde q)
	\leq 0 + \norm{g}_{\tilde H^{-1}} \norm{\tilde q}_{H^1}
	\leq c \norm{g}_{\tilde H^{-1}} \norm{\nabla \tilde q}_{L^2}\\
      &
	\leq c \norm{g}_{\tilde H^{-1}} \norm{z}_{L^2}.
\end{align*}
%For the first equality we used \refE{vq} and for the second we integrated by parts using $z,v\in H^1_0$.
%For the third equality we used the system \refE{zq} satisfied by $z$, while for the fourth equality we integrated by parts using $v\in H^1_0$.
Observe that our application of the Poincar\'e inequality to $\tilde q$ above is justified because $\tilde q$ has mean zero.
%This finishes the proof.
\end{proof}%}}}

\begin{remark}\label{rmkOnlyC2Use}
	In deriving \eqref{eqnweaklifting}, it is in only \refE{vh2} where we used the additional $C^2$-regularity assumption on $\del \Omega$.
\end{remark}

\iffalse%{{{ Tue 04/16 Gautam commented.
Define the space,
\begin{align}\label{eqnL2n}
	\LTwon(\Omega)
		= \set{u \in (L^2(\Omega))^2 \colon \dv u \in L^2(\Omega)
			\text{ and } u \cdot \n = 0 \text{ on } \prt \Omega}
\end{align}
endowed with the $L^2$-norm, observing that the normal trace of $u$ is well-defined since $\dv u \in L^2$. % Endowed with the usual $L^2$-inner product, $\LTwon(\Omega)$ is a Hilbert space, being a subspace of $L^2(\Omega$.

The following is an immediate corollary of \refL{TemamLemma}:
\begin{corollary}\label{clyL2nDecomp}
	For any $u \in \LTwon(\Omega)$ there exists a
	% (non-unique)
	decomposition, $u = v + z$,
	where $v \in H$ and
	$z \in H_0^1(\Omega)$, and where $z$, along with $q \in L^2(\Omega)$, solve
	\eqref{eqnzq}.
\end{corollary}
%\begin{proof}
%	Let $g = \dv u$ and let $(z, q)$, $z \in H^1_0(\Omega)$, be the solution to \eqref{eqnzq}. Then
%	$v = u - z \in H$.
%\end{proof}
\fi%}}}

We will need to use the lifting lemma for time-depedent functions. However, the time dependence is a secondary issue, and the time regularity of $g$ is directly related to that of its lifting, $z$. To see this it suffices to note that the map that associates to $g$ the $z$ defined as solution of \eqref{eqnzq} is a linear map, and thus standard arguments will give the following Lemma.
% \JimSideNote{9 Aug 2012}{We need the analog of \refL{TimeRegularityz} for \refL{Galdi} as well.}

\begin{lemma}\label{lmaTimeRegularityz} Let $\Omega \subset \R^d$, $d = 2, 3$, be a bounded $C^{m+1}$ domain, for $m\in\N$. 
	Assume  that $g$ lies in $L^p(0, T; H^m)$, $p$ in $[1, \iny]$, and that $g(t)$ has total mass zero
	for almost all $t$ in $[0, T]$.
	Let $(z, q)$ be the unique solution to \refE{zq} given by \refL{TemamLemma} for almost all times
	in $[0, T]$.
	Then	 we have
	\begin{align*}
		\norm{z}_{L^p(0, T; H^{m + 1})} + \norm{q}_{L^p(0, T; H^m)}
			\leq C \norm{g}_{L^p(0, T; H^m)}.
	\end{align*}
	If $\prt_t g$ lies in $L^p(0, T; H^{m})$,  and has total mass zero for almost all times in $[0,T]$ then 
	\begin{align*}
		\norm{\prt_t z}_{L^p(0, T; H^{m + 1})} + \norm{\prt_t q}_{L^p(0, T; H^{m})}
			\leq C \norm{\prt_t g}_{L^p(0, T; H^{m})}.
	\end{align*}	
	Moreover if $g\in C( [0,T];H^m)$ and has total mass zero for every time, then $z\in C( [0,T];H^{m+1})$ and 
	\begin{align*}
		\norm{z}_{C( [0, T]; H^{m + 1})} + \norm{q}_{L^p(0, T; H^m)}
			\leq C \norm{g}_{L^p(0, T; H^m)}.
	\end{align*}
				For $m=0$, in order to have the results above it suffices to assume that $\Omega$ is Lipschitz.		
\end{lemma}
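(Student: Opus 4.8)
The plan is to exploit that the map $\Lambda\colon g \mapsto (z,q)$ sending the datum $g$ to the solution of \refE{zq} is \emph{linear} and, by \refL{TemamLemma}, bounded between the relevant spaces; every assertion then follows by applying the pointwise-in-time elliptic estimate and commuting $\Lambda$ with integration in time and with the time derivative. No new analysis is needed beyond \refL{TemamLemma}, which supplies the slicewise bounds.

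First I would fix $t$ and apply \refL{TemamLemma} to $g(t)$. For a $C^{m+1}$ domain with $m \geq 1$, the bound \refE{ADNL2} with its index replaced by $m-1$ gives $\norm{z(t)}_{H^{m+1}} + \norm{q(t)}_{H^m} \leq C\norm{g(t)}_{H^m}$ for almost every $t$; for $m = 0$ the same inequality (with $H^{m+1} = H^1$, $H^m = L^2$) is exactly \refE{H1boundz}, which holds merely for Lipschitz $\Omega$, accounting for the final sentence of the statement. Because $\Lambda$ is a bounded linear operator it is continuous, so its composition with the strongly measurable map $t \mapsto g(t)$ is again strongly measurable; thus $t \mapsto z(t)$ and $t \mapsto q(t)$ are strongly measurable into $H^{m+1}$ and $H^m$, and the Bochner norms are well defined. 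Raising the pointwise estimate to the $p$-th power and integrating over $[0,T]$ (or taking the essential supremum when $p = \infty$) yields the first displayed inequality.

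For the time-derivative statement the key point is that $\Lambda$ does not depend on $t$, hence commutes with $\prt_t$. Concretely, let $\phi \in C_c^\infty(0,T)$ and write $\Lambda_1$ for the first component of $\Lambda$. Since a bounded linear operator may be interchanged with the Bochner integral, $\int_0^T z(t)\,\phi'(t)\,dt = \Lambda_1\!\bigl(\int_0^T g(t)\,\phi'(t)\,dt\bigr) = -\Lambda_1\!\bigl(\int_0^T \prt_t g(t)\,\phi(t)\,dt\bigr) = -\int_0^T \Lambda_1(\prt_t g(t))\,\phi(t)\,dt$, where the hypothesis that $\prt_t g$ has mass zero for a.e.\ $t$ guarantees $\Lambda$ is defined on it. This identifies $\prt_t z = \Lambda_1(\prt_t g)$ (and similarly $\prt_t q = \Lambda_2(\prt_t g)$) as weak time derivatives, and applying the pointwise estimate of the previous paragraph to $\prt_t g$ in place of $g$ produces the second inequality. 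Finally, when $g \in C([0,T];H^m)$, continuity of the bounded linear map $\Lambda_1 \colon H^m \to H^{m+1}$ immediately gives $z = \Lambda_1 \circ g \in C([0,T];H^{m+1})$, with the stated bound following as before.

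I expect no serious obstacle: the substance is entirely contained in the elliptic estimates of \refL{TemamLemma}, and the only care required is the routine verification of strong measurability and of the interchange of $\Lambda$ with integration and differentiation, both of which are immediate consequences of the linearity and boundedness of the solution map. If anything, the one point to state precisely is that the weak time derivative commutes with $\Lambda$, i.e.\ that $\prt_t z$ really is the lifting of $\prt_t g$, which is exactly the content of the displayed test-function computation above.
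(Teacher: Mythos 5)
Your proof is correct and follows essentially the same route as the paper, which justifies the lemma only by the remark that the solution map $g \mapsto (z,q)$ of \refE{zq} is linear and bounded (via \refL{TemamLemma}, with \refE{ADNL2} shifted down one index for $m\geq 1$ and \refE{H1boundz} for the Lipschitz $m=0$ case) so that ``standard arguments'' apply. Your write-up simply makes those standard arguments explicit---slicewise estimates, strong measurability, interchange of the bounded operator with the Bochner integral to identify $\prt_t z = \Lambda_1(\prt_t g)$---which is exactly what the paper leaves to the reader.
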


%}}}

% Writing $z = \LP z + \grad r$, we have $\Delta \LP z = \Delta z - \Delta \grad r = \grad q - \grad \Delta r = \grad q - \grad \dv z = \grad (q - \al)$. Thus, $\Delta (q - \al) = \dv \grad (q - \al) = \dv \Delta \LP z = \Delta \dv \LP z = 0$.

%
% Section
%
\section{Global existence of weak solutions}\label{sxnWeak}%{{{1
Identifying elements of $V$ with it's dual $V'$, one usually defines weak solutions to the standard (incompressible) Navier-Stokes equations by only using elements of $V$ as test functions.
This of course has the added advantage of completely eliminating the pressure from the equations.
For the extended dynamics, however, the velocity field $u$ is not incompressible, but only an element in $H^1_0$.
At first sight, one would expect the natural definition of weak solutions to involve testing against arbitrary $H^1_0$ functions.
Unfortunately, this poses a few problems. Suppose that $u\in H^1_0\cap H^2$ and $\varphi\in H^1_0$.
Using~$\dualpairing{\cdot}{\cdot}$ to denote the dual pairing between $V'$ and $V$, one has
$$
  \dualpairing{\LP \lap u}{\varphi} = \dualpairing{\lap u}{\LP \varphi}
$$
as before; now, however, $\LP \varphi \neq \varphi$ so $\LP \varphi$ need not vanish on the boundary. This would force us to introduce into the weak formulation an unwanted boundary integral.
% I am not sure what the next sentence really means (Jim):
% Hence we can not always integrate the last term by parts, and thus make sense of it for $u$ only in $H^1_0$.
Thus, it is still advantageous to use functions from $V$ as our space of test functions.
This, of course, will only recover the incompressible dynamics. For the remainder, we use the weak form of the heat equation for the divergence.

\begin{definition}\label{dfnWeakS}%{{{
	Let $u_0 \in L^2(\Omega)$ be such that $\dv u \in L^2(\Omega)$ and $u \cdot \n = 0$ on $\del \Omega$.
	We say that $u$ is a {\it weak solution} of the \extended Navier-Stokes equations~\eqref{eqnENSSimpleForm} with initial data $u_0$ if $u(0) = u_0$,
\begin{align*}
	% Tue 04/16 GI: Removed L^\iny(0, T; L^2)
	u &\in C( [ 0, T]; V') \cap L^2(0, T; H_0^1),
	% 5 June 2013 JPK: \delta u requirement is strange; in any case,
	% was L^1_{loc}(0, T; V') which made no grammatical sense
	\qquad \delt u \in L^1(0, T; V'), \\
	% %\quad\text{and}\quad
	% \qquad
	\divergence u &\in C( [ 0, T]; L^2) \cap L^2(0, T; H^1),
\end{align*}
and
\begin{gather}
  \label{eqnWeakSu}
    \dt (u,\varphi)+( (u\cdot\grad)u,\varphi)=-\nu (\grad u,\grad\varphi)+\langle f,\varphi\rangle, \\
  \label{eqnWeakSdivergence}
    \dt (\dv u,q)=-\nu(\grad\dv u,\grad q),
\end{gather}
for every test function $\varphi\in V$, and every test function $q\in H^1$.
The time derivatives in \refEAnd{WeakSu}{WeakSdivergence} are weak distributional derivatives.
\end{definition}%}}}

% Tue 04/16 GI: Overkill? Have continuity in the definition itself.
% As for the classical incompressible Navier-Stokes, the initial condition, $u(0) = u_0$, can me made sense of because of the continuity in time in $V'$ of the velocity.
% (Note that $u \in V'$ even though it is not divergence-free.)
% Also see \refR{ContinuityInTime}.

\begin{remark*}%\label{rmkprtu}
    By $\prt_t u$ in \refD{WeakS} we mean the weak time derivative of $u$ (see \cite{bblEvansBook}*{\S5.9.2}).
		Namely we require $\int_0^T \phi(t) \prt_t u(t) \, dt = -\int_0^T \phi'(t) u(t) \, dt$ for all scalar test functions $\phi \in C_c^\iny((0, T))$.
\end{remark*}

% If further $u \in H^2 \cap H^1_0$ then Proposition~\ref{ppnStrongHoldsae} (below) shows that~$u$ must satisfy ~\eqref{eqnENSSimpleForm} almost everywhere.

\begin{remark*}
	Equation~\eqref{eqnWeakSdivergence} says that $\dv u$ satisfies a weak formulation of the heat equation with Neumann boundary conditions.
\end{remark*}

\begin{remark*}
The weak formulation above may be compared to one  
described by Sani {\it et al.}~\cite{bblSani}
for a Stokes system with a pressure Poisson equation but with zero divergence
constraint. 
\end{remark*}

\iffalse
That $\dv u$ satisfies the heat equation weakly is part of \refD{WeakS}. We will have need for a separate formulation of such weak solutions, which we give in \refD{WeakHeat}.
\begin{definition}\label{dfnWeakHeat}
    Let $\tilde H^{-1}(\Omega)$ denotes the dual of $H^1(\Omega)$.
    A scalar function $g \in C( [0, T]; L^2)\cap L^2(0, T; H^1)$
    with $\prt_t g$ in $L^2(0, T;\tilde H^{-1})$ (interpreted as in
    \refR{prtu})
    is a weak solution to the heat equation with initial value $g_0$
    if $g(0) = g_0$ and
    \begin{align*}
        \diff{}{t} (g,q)=-\nu(\grad g,\grad q)
            \text{ for all } q \in H^1,
    \end{align*}
    where the time derivative is the weak distributional derivative.
\end{definition}
\fi

Our main theorem is the existence of weak solutions to~\eqsysWeakS.
\begin{theorem}[Global existence of weak solutions]\label{thmExistence}%{{{
 % \sidenote{Sun 05/27/12 GI: Looks like piecewise $C^2$ with convex angles will work? More importantly, can we assume $\divergence u_0$ is % smooth, and get away with less boundary regularity? (Appears not, from your note; but thought I would confirm.)}
Let $\Omega\subset\R^d$, $d=2,3$, be a bounded domain, and let $T > 0$ be arbitrary.  
Suppose that $u_0 \in L^2(\Omega)$ and $u_0$, $f$ satisfy
  \begin{equation*}%\label{eqnWeakIDAssumptions}
    %u_0 \in L^2(\Omega), \quad
		\dv u_0 \in L^2(\Omega),
		\quad u_0 \cdot \n = 0 \text{ on } \del \Omega,
    \quad\text{and}
    \quad f\in L^2(0,T;V').
  \end{equation*}
  If either
  $$ 
    \prt\Omega \text{ is } C^2
  $$
  or
  $$
			\prt\Omega \text{ is locally Lipschitz}
      \text{ and }
      \divergence u_0 \in H^2(\Omega),
	$$
  then there exists a weak solution $u$ to~\eqsysWeakS with initial data $u_0$ such that
	$u \in C([0, T]; V')$,
  $\prt_t u \in L^{4/3}(0, T; V')$ and
  $\divergence u \in C^\infty(\Omega \times (0, T))$.
	In two dimensions, the exponent $4/3$ can be improved to $2$.

	If further $f \in L^1(0, T; H^{-1})$,
	%then $\prt_t u \in L^{4/3}(0, T; H^{-1})$, $u \in C([0, T]; H^{-1})$, and
	there exists a distribution $p$ such that equation
	\begin{align}\label{eqnDistSol}
			\delt u + (u \cdot \grad) u - \nu \lap u + \grad p
					= f,
	\end{align}
	is satisfied in the sense of distributions.
	%with equality holding in $L^{4/3}(0, T; H^{-1})$
	%and as distributions in $\Cal{D}'((0, T) \times \Omega)$.
\end{theorem}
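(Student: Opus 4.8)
The plan is to exploit the $H^1_0$-orthogonal decomposition $u = v + z$ from \refS{Decomposition}, which decouples the problem into a scalar heat equation for $g = \dv u$ and a perturbed Navier--Stokes equation for the divergence-free part $v$. First I would solve \eqref{eqnWeakSdivergence} for $g = \dv u$: since this is exactly the weak Neumann heat equation with datum $\dv u_0$, standard linear parabolic theory (the facts collected in the appendix) yields a unique $g \in C([0,T];L^2)\cap L^2(0,T;H^1)$, while interior parabolic smoothing gives $g \in C^\infty(\Omega\times(0,T))$, already delivering the claimed regularity $\dv u \in C^\infty(\Omega\times(0,T))$. Feeding $g$ into \refL{TemamLemma} and \refL{TimeRegularityz} produces $z$ together with its time regularity, and the two hypotheses of the theorem correspond precisely to the two available lifting bounds: the weak estimate \eqref{eqnweaklifting} in $\widetilde{H}^{-1}$ when $\prt\Omega$ is $C^2$, and the $H^1$ estimate \eqref{eqnH1boundz} fed by the extra regularity $\dv u_0 \in H^2$ when $\prt\Omega$ is merely Lipschitz.

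With $g$ and $z$ in hand, $v = u - z$ should satisfy, for every $\varphi \in V$,
\[
  \dt(v,\varphi) + \bigl( ((v+z)\cdot\grad)(v+z), \varphi \bigr)
    = -\nu(\grad v,\grad\varphi) + \langle f,\varphi\rangle - \dt(z,\varphi),
\]
where crucially the term $\nu(\grad z,\grad\varphi)$ drops out because $z \in V^\perp$ in the $H^1_0$ inner product. I would construct $v$ by a Galerkin scheme in the Stokes eigenbasis of $V$. Testing against $v$ itself, the leading term $\nu\norm{\grad v}_{L^2}^2$ is coercive, the genuine nonlinearity $(v\cdot\grad v,v)$ vanishes as usual, and the remaining couplings $(z\cdot\grad v,v)$, $(v\cdot\grad z,v)$, $(z\cdot\grad z,v)$ together with the forcing $\dt(z,\varphi)$ are controlled by the Ladyzhenskaya--Sobolev inequalities and the regularity of $z$ just obtained; a Gronwall argument then closes the estimate and yields $v \in L^\infty(0,T;L^2)\cap L^2(0,T;H_0^1)$. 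Passing to the limit in the nonlinear term requires strong $L^2$ compactness, which I would obtain from Aubin--Lions once $\prt_t v$ is bounded in $L^{4/3}(0,T;V')$ (in $L^2$ in two dimensions), the exponent being dictated by the standard bound on $(u\cdot\grad)u$ in $V'$ under $u\in L^\infty L^2\cap L^2 H^1_0$. Reassembling $u = v + z$ gives the asserted weak solution, the continuity $u \in C([0,T];V')$, and the time regularity $\prt_t u \in L^{4/3}(0,T;V')$.

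For the final assertion, assume additionally $f \in L^1(0,T;H^{-1})$, which is exactly what lets $f$ act as a genuine vector distribution rather than merely as an element of $V'$ pairing with divergence-free fields. I would regard
\[
  L := f - \prt_t u - (u\cdot\grad)u + \nu\lap u
\]
as a vector-valued distribution on $\Omega\times(0,T)$: each summand is one, since $\prt_t u \in L^{4/3}(0,T;V')$, $\lap u$ makes sense for $u\in L^2(0,T;H^1_0)$, the convective term lies in $L^{4/3}(0,T;V')$, and $f\in L^1(0,T;H^{-1})$. The weak formulation \eqref{eqnWeakSu}, multiplied by $\psi(t)\in C_c^\infty(0,T)$ and integrated, says that $\langle L,\phi\rangle = 0$ for every $\phi \in C_c^\infty(\Omega\times(0,T))$ with $\dv_x\phi = 0$. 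By the time-dependent de Rham lemma (Temam) there is a distribution $p$ on $\Omega\times(0,T)$ with $L = \grad p$, which is precisely \eqref{eqnDistSol}.

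The main obstacle I anticipate is closing the a~priori energy estimate for $v$ in the low-regularity regime: with only $L^2$ initial data on a Lipschitz domain, the coupling terms involving $z$ and the convective nonlinearity must be absorbed using the critical Sobolev embeddings, and one must check that the lifting bounds of \refS{Decomposition} (the $\widetilde{H}^{-1}$ bound under $C^2$, the $H^1$ bound under Lipschitz-plus-$H^2$) supply exactly enough control on $z$ and $\prt_t z$ for Gronwall to close. A secondary delicacy is that de Rham's lemma and pressure recovery are most transparent on smooth domains; on a Lipschitz domain I would phrase the recovery purely distributionally to sidestep boundary regularity, which is why the conclusion is asserted only in the sense of distributions.
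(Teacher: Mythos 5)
Your proposal is correct and follows essentially the same route as the paper's proof: a heat equation for $\dv u$, the Stokes lifting to $z$ (with the two hypotheses matched to the $\tilde H^{-1}$ bound \eqref{eqnweaklifting} in the $C^2$ case and the $H^1$ bound \eqref{eqnH1boundz} applied to $\prt_t g$ in the Lipschitz/$H^2$ case), and a Galerkin--Gronwall argument for the perturbed Navier--Stokes equation satisfied by $v = u - z$, which is exactly \eqref{eqnENSEquiv}. The only cosmetic difference is the pressure step: you invoke a space-time de Rham lemma directly on $L = f - \prt_t u - (u\cdot\grad)u + \nu\lap u$, whereas the paper integrates the equation in time, applies the stationary de Rham lemma to $G(t)$ to get $P \in L^1(0,T;L^2)$ with $G = \grad P$, and sets $p = -\prt_t P$; both are standard and lead to \eqref{eqnDistSol}.
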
%}}}
\begin{remark}%{{{
  We reiterate that in Lipschitz domains we need an added regularity assumption on $\divergence u_0$.
  In $C^2$ domains, however, we can dispense with this assumption by using~\eqref{eqnweaklifting} (see also Remark~\ref{rmkOnlyC2Use}).
  % , the $\tilde H^{-1}$ for $z$.
\end{remark}%}}}

The technique used to prove existence combined with relatively standard methods quickly yields uniqueness of weak solutions in 2D.

\begin{proposition}[Uniqueness in 2D]\label{ppn2DWeakUniqueness}%{{{
If $\Omega \subseteq \R^2$ is a bounded, connected Lipschitz domain then weak solutions to~\eqsysWeakS are unique.
\end{proposition}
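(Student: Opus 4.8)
The plan is to exploit the $H^1_0$-orthogonal decomposition of \refS{Decomposition}. Let $u_1$ and $u_2$ be two weak solutions with the same data $u_0$, and write $u_i = v_i + z_i$ with $v_i \in V$ and $z_i \in V^\perp$. The first step is to show $z_1 = z_2$. By~\eqref{eqnWeakSdivergence} each $g_i := \dv u_i$ is a weak solution of the heat equation with Neumann boundary conditions and the same initial datum $\dv u_0$; setting $\psi := g_1 - g_2$ and testing with $q = \psi$ (legitimate since $g_i \in L^2(0,T;H^1)$ with $\partial_t g_i \in L^2(0,T;\tilde H^{-1})$) gives
\[
\tfrac12\frac{d}{dt}\norm{\psi}_{L^2}^2 = -\nu\norm{\grad\psi}_{L^2}^2 \le 0 .
\]
Since $\psi(0)=0$, we get $\psi \equiv 0$, i.e. $\dv u_1 = \dv u_2 =: g$. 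As $z_i$ is the lifting of $g_i$ furnished by \refL{TemamLemma}, the uniqueness in that lemma forces $z_1 = z_2$.

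Consequently $w := u_1 - u_2 = v_1 - v_2$ satisfies $\dv w = 0$ with $w \in H^1_0$, so $w(t) \in V$ for a.e.\ $t$ and is an admissible test function in~\eqref{eqnWeakSu}. Recasting~\eqref{eqnWeakSu} with $\partial_t u_i \in V'$, subtracting the two momentum equations (the forcing cancels), and pairing with $\varphi = w(t)$ yields, after the usual rearrangement of the convective term,
\[
\dualpairing{\partial_t w}{w} + b(w,u_1,w) + b(u_2,w,w) = -\nu\norm{\grad w}_{L^2}^2 ,
\]
where $b(a,\phi,\chi) := \ip{(a\cdot\grad)\phi}{\chi}$. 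Because in 2D the theorem supplies $\partial_t u_i \in L^2(0,T;V')$, we have $w \in L^2(0,T;V)$ with $\partial_t w \in L^2(0,T;V')$, so the Lions--Magenes lemma justifies $\dualpairing{\partial_t w}{w} = \tfrac12\frac{d}{dt}\norm{w}_{L^2}^2$.

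It remains to estimate the two trilinear terms. Unlike the classical Navier--Stokes situation, $u_2$ is \emph{not} divergence-free, so $b(u_2,w,w)$ does not vanish; integrating by parts (the boundary term drops since $w\in H^1_0$) gives $b(u_2,w,w) = -\tfrac12\ip{g}{\abs{w}^2}$. Using the 2D Ladyzhenskaya inequality $\norm{w}_{L^4}^2 \le C\norm{w}_{L^2}\norm{\grad w}_{L^2}$ — valid on any bounded domain by extending $w\in H^1_0$ by zero to $\R^2$ — both terms are controlled:
\[
\abs{b(w,u_1,w)} + \abs{b(u_2,w,w)} \le C\bigl(\norm{\grad u_1}_{L^2} + \norm{g}_{L^2}\bigr)\norm{w}_{L^2}\norm{\grad w}_{L^2}.
\]
Absorbing $\norm{\grad w}_{L^2}$ into the viscous term by Young's inequality leaves
\[
\frac{d}{dt}\norm{w}_{L^2}^2 \le \frac{C}{\nu}\bigl(\norm{\grad u_1}_{L^2}^2 + \norm{g}_{L^2}^2\bigr)\norm{w}_{L^2}^2 ,
\]
where $\norm{\grad u_1}_{L^2}^2 \in L^1(0,T)$ by the energy bound and $\norm{g}_{L^2}^2 \in L^\infty(0,T)$ since $g\in C([0,T];L^2)$. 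As $w(0)=0$, Gr\"onwall's inequality gives $w\equiv 0$, hence $u_1 = u_2$.

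\textbf{Main obstacle.} The delicate points are genuinely the two features that distinguish this from textbook Navier--Stokes uniqueness: first, making the energy identity rigorous in a merely Lipschitz domain, which hinges on the $L^2(0,T;V')$ regularity of $\partial_t u$ special to two dimensions (this is exactly what the Lions--Magenes integration-by-parts in time requires); and second, the non-classical term $b(u_2,w,w)$ produced by the nonzero divergence, which is tamed precisely by the heat-equation control on $g = \dv u$ established in the first step.
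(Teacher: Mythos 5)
Your proof is correct and follows essentially the same route as the paper's: uniqueness of the Neumann heat flow gives $\dv u_1=\dv u_2$, uniqueness in the lifting lemma (Lemma~\ref{lmaTemamLemma}) gives $z_1=z_2$, and a Ladyzhenskaya--Gronwall energy estimate on the divergence-free difference, with the non-classical advection term handled by integrating by parts against $\dv u$, closes the argument. The only cosmetic difference is that you subtract the momentum equations for $u_1,u_2$ directly rather than for the projected parts $v_1,v_2$ as in \eqref{eqnENSEquiv}; since $z_1=z_2$ the two difference equations coincide (and the time-derivative regularity you quote from Theorem~\ref{thmExistence} is in fact derivable from the equation for any weak solution, so no circularity results).
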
%}}}

For regular enough initial data it also yields strong solutions and higher regularity, which we discuss in Section~\ref{sxnStrong}.
We begin by proving Theorem~\ref{thmExistence}.

\begin{proof}[Proof of Theorem~\ref{thmExistence}]%{{{
  %We begin by rewriting~\eqref{eqnWeakSu} in an equivalent form that is better suited for an a priori estimate.
  
Assume momentarily that $u$ is a weak solution of the \extended Navier-Stokes system as defined in \refD{WeakS}.
Let $g = \divergence u$, $g_0 = \dv u_0$, $z$ be the solution to~\eqref{eqnzq}, and $v = u - z$.
The main point of this decomposition is that equation~\eqref{eqnWeakSdivergence} completely determines $g$ in terms of $\divergence u_0$, which in turn determines $z$.

From~\refE{WeakSdivergence}, $g$ is the (weak) solution of
\begin{equation}\label{eqngHeat}
	\begin{beqn}
		\prt_t g = \nu \Delta g & in $\Omega \times (0, T]$, \\
		\grad g \cdot \n = 0 & on  $\prt \Omega \times (0, T]$, \\
		g(0) = g_0 & on $\Omega$.
	\end{beqn}
\end{equation}
By \refL{HeatEqtildeHeat}, $g \in C( [0, T]; L^2) \cap L^2(0, T; H^1)$.

We observe that by testing with the constant function $q\equiv 1$, we have
  \[
    \dt \int_\Omega g
        = \dt (g, q)
        = - \nu (\grad g, \grad q)
        = 0.
\] 
Since $g(0) = \dv u_0$, we have $\int_\Omega g(0) = \int_{\prt \Omega} u_0 \cdot \n = 0$, so $g(t)$ has mean zero for all $t \ge 0$. This allows us to apply \refL{TemamLemma} to lift $g$ to the unique vector field $z$ that solves~\eqref{eqnzq}.

By \refL{BasiczRegularityL2}, $z \in C( [0, T]; H_0^1) \cap L^2(0, T; H^2)$. Hence, $z$ has sufficient regularity and all we need to prove is that there exists a
%$v \in C( [0, T]; H) \cap L^2(0, T; V)$
$v \in C( [0, T]; V') \cap L^2(0, T; V)$
satisfying the perturbed weak-form Navier-Stokes equation,
\begin{equation}\label{eqnENSEquiv}
	\dt (v,\varphi) + \nu( \grad v,\grad\varphi) +(v \cdot \grad v,\varphi) 
		= \langle\widetilde{f}, \varphi\rangle - (v \cdot \grad z,\varphi) 
			- (z \cdot \grad v,\varphi),
\end{equation}
for any test function $\varphi \in V$, with initial data $v_0 = u_0 - z(0)$. Here, 
\begin{equation}\label{eqntildef}
\widetilde{f} \defeq f - z \cdot \grad z - \prt_t z,
\end{equation}
 and we took advantage of $(\grad z, \grad \varphi) = - (\Delta z, \varphi) = 0$ by \refE{zq}.
Note that the assumption $u_0 \cdot \n=0$ on $\del \Omega$ guarantees $v_0 \in H$.
%We also know by \emph{TODO clyL2nDecomp} that $v_0 \in H$.
  
We prove the existence of a function~$v$ satisfying~\eqref{eqnENSEquiv} following the classical approach (e.g.\ \cites{bblLeray,bblConstFoias,bblTemam}).
Since the standard Galerkin scheme (\cites{bblConstFoias, bblTemam}, for instance) readily yields an approximating sequence of solutions to~\eqref{eqnENSEquiv}, we only prove an a~priori estimate.
Choosing $\varphi = v$, equation~\eqref{eqnENSEquiv} becomes
  \begin{equation}\label{eqnENSnormV}
    \frac{1}{2} \dt \norm{v}_{L^2}^2 + \nu \norm{\grad v}_{L^2}^2
      = \langle\widetilde{f}, v\rangle - (v \cdot \grad v, v) - (v \cdot \grad z, v) - (z \cdot \grad v, v).
  \end{equation}
Two of the terms on the right-hand side of \refE{ENSnormV} are readily dealt with:
  $$
    \abs{\dualpairing{\widetilde{f}}{v}} \leq \norm{v}_{V} \norm{\widetilde{f}}_{V'} \leq \frac{\nu}{2} \norm{\grad v}_{L^2}^2 + \frac{C}{\nu} \norm{\widetilde{f}}_{V'}^2
  $$
  and
  $$
    \ip{ (v \cdot \grad) v}{ v } = 0.
  $$
  We split the analysis of the remaining terms into two cases depending on the dimension.
  If $d = 2$, using the Sobolev,  H\"older and Ladyzhenskaya  inequalities (see for instance~\cite{bblTemam}*{Lemma III.3.3}) yield
	%\sidenote{Sun 08/05/12 Dani: I replaced the reference to the book of Constantin \& Foias, as there the required regularity of the domain was at least $C^1$}
  $$
    \abs{\ip{(z \cdot \grad) v}{v}}
      = \frac{1}{2} \abs*{ \ip{\divergence z}{\abs{v}^2} }
      \leq \frac{1}{2} \norm{\grad z}_{L^2} \norm{v}_{L^4}^2
      %\leq c \norm{\grad z}_{L^2} \norm{\grad v}_{L^2} \norm{v}_{L^2}
      \leq \frac{\nu}{8} \norm{\grad v}_{L^2}^2 + \frac{C}{\nu} \norm{\grad z}_{L^2}^2 \norm{v}_{L^2}^2
  $$
  The term $\ip{ (v \cdot \grad) z}{v}$ satisfies the same bound.
  Consequently, equation~\eqref{eqnENSnormV} reduces to the differential inequality,
  $$
    \dt \norm{v}_{L^2}^2 + \frac{\nu}{2} \norm{\grad v}_{L^2}^2 \leq \frac{c}{\nu} \norm{\widetilde{f}}_{V'}^2 + \frac{C}{\nu} \norm{\grad z}_{L^2}^2 \norm{v}_{L^2}^2,
  $$
  and Gronwall's lemma gives
  \begin{multline}\label{eqnEnergyBoundWider}
    \norm{v(t)}_{L^2}^2 + \nu \int_0^t \norm{\grad v(s)}_{L^2}^2 \, ds\\
      \leq \left( \norm{v_{0}}_{L^2}^2 +\frac{C}{\nu} \int_0^t \norm{\widetilde{f}(s)}_{V'}^2 \, ds \right) \exp \left( \frac{C}{\nu} \int_0^t \norm{\grad z(s)}_{L^2}^2 \, ds \right).
  \end{multline}

  The three-dimensional case is similar, except that we use now the interpolation inequality $\norm{v}_{L^4}\leq \norm{v}_{L^2}^{1/4}\norm{v}_{L^6}^{3/4}$:
  $$
    \abs{\ip{(z \cdot \grad) v}{v}}
      = \frac{1}{2} \abs*{ \ip{\divergence z}{\abs{v}^2} }
      \leq \frac{1}{2} \norm{\grad z}_{L^2} \norm{v}_{L^4}^2
      %\leq c \norm{\grad z}_{L^2} \norm{\grad v}_{L^2}^{3/4} \norm{v}_{L^2}^{1/4}
      \leq \frac{\nu}{8} \norm{\grad v}_{L^2}^2 + \frac{C}{\nu^3} \norm{\grad z}_{L^2}^4 \norm{v}_{L^2}^2.
  $$
  Hence instead of~\eqref{eqnEnergyBoundWider}, Gronwall's lemma gives
  \begin{multline}\label{eqnEnergyBoundWider3D}
    \norm{v(t)}_{L^2}^2 + \nu \int_0^t \norm{\grad v(s)}_{L^2}^2 \, ds\\
      \leq \left( \norm{v_{0}}_{L^2}^2 +\frac{C}{\nu} \int_0^t \norm{\widetilde{f}(s)}_{V'}^2 \, ds \right) \exp \left( \frac{C}{\nu^3} \int_0^t \norm{\grad z(s)}_{L^2}^4 \, ds \right).
  \end{multline}
%[remove:] which is enough to show the existence of global weak solutions.

To complete these bounds in $2$ or $3$ dimensions, we need to show that the right-hand sides of~\eqref{eqnEnergyBoundWider} and~\eqref{eqnEnergyBoundWider3D} remain bounded for all $t\in[0,T]$.     The existence of a weak solution then follows by standard methods. (We elaborate somewhat on these methods in \refS{Strong}, where we construct higher-regularity solutions.)

Because $\prt_t z$ appears in $\widetilde{f}$, bounding the right-hand sides of \refEAnd{EnergyBoundWider}{EnergyBoundWider3D} will require knowledge of the time regularity of $z$, something we have not so far needed. We divide the analysis into two cases.
%[3/9/13 RP: These bounds should actually be done first, I think.]
%[4/5/13 JK: Doing it first might be a distraction from the main point of the proof, so instead I added a sentence in the paragraph above giving our excuse for returning to the regularity of $z$.]
  
\setcounter{CaseCounter}{0}
\begin{case}[$\dv u_0 \in L^2$ and $\prt \Omega$ is $C^2$]
 
From \refL{BasiczRegularityL2},% and \refL{TimeRegularityz}
    \begin{equation}\label{weakzReg}
      z\in C( [0,T]; H_0^1)\cap L^2(0,T;H^2)
      \quad\text{and}\quad
      \prt_t z\text{ in }L^2(0,T;L^2).
    \end{equation}
% Added by Jim on 5 June 2013
That $\prt_t u \in L^1(0, T; V')$ then follows, since $L^2 \subseteq V'$.
In both $2$ and $3$ dimensions, duality and the interpolation inequality $\norm{g}_{L^4}\leq \norm{g}_{L^2}^{1/4}\norm{g}_{L^6}^{3/4}$ give
    $$
      \norm{(z \cdot \grad) z}_{V'} \leq \norm{\grad z}_{L^2}^2,
    $$
    and hence
		\begin{equation}\label{eqnTildeFBound}
      \norm{\widetilde{f}}_{V'}^2 \leq C \left( \norm{\prt_t z}_{L^2}^2 + \norm{\grad z}_{L^2}^2 + \norm{f}_{V'}^2 \right).
		\end{equation}
    Now~\eqref{weakzReg} shows that the right hand sides of both~\eqref{eqnEnergyBoundWider} and~\eqref{eqnEnergyBoundWider3D} remain bounded. 
  \end{case}

  \begin{case}[$\divergence u_0 \in H^2$]
    The proof is similar to the previous case, except that because the boundary is only Lipschitz continuous,
    we cannot use the bound in \refE{zBoundL2} on $\prt_t z$.
    Instead, observe that $\prt_t g$ satisfies the heat equation with an initial value of $\nu \lap \divergence u_0$, which lies in $L^2(\Omega)$.
    Hence both $g \in C( [ 0, T]; L^2)$ and $\prt_t g \in C( [ 0, T]; L^2 )$.
    Consequently, instead of~\eqref{weakzReg}, \refL{TimeRegularityz} applied to $g$ and $\prt_t g$ gives
    $$
      z \in L^\infty( 0, T; H^1 )
      \quad\text{and}\quad
      \prt_t z \in L^\infty( 0, T; H^1 ),
    $$
    and the same argument used in the previous case shows that the right hand sides of both~\eqref{eqnEnergyBoundWider} and~\eqref{eqnEnergyBoundWider3D} remain bounded. 
  \end{case}
  
	We now turn to time regularity of $u$.
	Integrating~\eqref{eqnWeakSu} by parts gives
	\begin{equation}\label{eqnDtVWeak}
				\frac{d}{dt} \dualpairing{u}{\varphi} = \dualpairing{F}{\varphi}
				\text{ for all } \varphi \in V,
				\quad\text{where }
				F \defeq f - u \cdot \grad u + \nu \Delta u.
	\end{equation}
	Observe $F \in L^{4/3}(0, T; V')$.
	Indeed, $f \in L^2( 0, T; V' )$ by assumption.
	Since $u \in L^2(0, T; H^1_0 )$, we certainly have $\lap u \in L^2( 0, T; H^{-1} )$.
	Finally, for the nonlinear term, standard Calculus inequalities~\cite{bblTemam}*{Lemma III.3.3} give
	$$
		\abs*{\ip{(u\cdot \grad) u}{\phi}}
			\leq \norm{u}_{L^3} \norm{\grad u}_{L^2} \norm{\phi}_{L^6}
			\leq C \norm{u}_{L^2}^{1/2} \norm{\grad u}_{L^2}^{3/2} \norm{\phi}_{H^1}.
	$$
	for any $\phi \in H^1$.
	Thus, by duality, $\norm{u \cdot \grad u}_{H^{-1}} \leq C \norm{u}_{L^2}^{1/2} \norm{\grad u}_{L^2}^{3/2}$ and hence
	$u \cdot \grad u \in L^{4/3}(0, T; H^{-1})\subset L^{4/3}(0, T; V')$.
	This shows $F \in L^{4/3}(0, T; V')$.
	Now~\eqref{eqnDtVWeak} and the fact that $u, F \in L^{4/3}(0, T; V')$ imply that $\delt u = F$ (as elements of $V'$), and $u \in C(0, T; V')$ as desired (see for instance~\cite{bblTemam}*{Lemma III.1.1}).
	%This proves the desired regularity of $\delt u$.

	To establish~\eqref{eqnDistSol}, define
	$$
		G(t) = u(t) - u_0 + \int_0^t \big(  u \cdot \grad u - \nu \lap u + f \big) \, ds.
	$$
	Then $G \in L^1(0, T; H^{-1})$ and by equation~\eqref{eqnWeakSu} we have
	$$
		\dualpairing{G}{\varphi} = 0 \quad\text{for all } \varphi \in V.
	$$
	Consequently,
	% after normalization by an additive spatially constant function,
	the de Rham Lemma~\cite{bblTemam}*{Proposition I.1.1, Remark I.1.9} guarantees the existence of $P \in L^1(0, T; L^2)$ such that $G = \grad P$.
	Setting $p = -\delt P$ and taking the distributional time derivative of the equation $G = \grad P$ immediately gives~\eqref{eqnDistSol}.
%}}}
\end{proof}%}}}

\iffalse%{{{ Tue 04/16 GI: Commented, with explanation in a message to Jim.
\begin{remark}\label{rmkContinuityInTime}
	Let $u = v + z$ be the decomposition in the proof of \refT{Existence}.
	Then $v \in C_w(0, T; H)$ follows as in the classical setting, using the same argument as on the
	bottom of page 72 of \cite{bblConstFoias}, which easily accommodates the additional linear terms.
	Thus, $v \in C_w(0, T; H)$, while $z \in C(0, T; H_0^1)$, which is perhaps a more meaningful way
	of describing the time continuity than to say that $u \in C(0, T; V')$. (In 2D, as for the classical
	Navier-Stokes equations, we have $v \in C(0, T; H)$.)
\end{remark}
\fi%}}}

We conclude this section by proving uniqueness in 2D.
\begin{proof}[Proof of Proposition~\ref{ppn2DWeakUniqueness}]%{{{
  Suppose $u_1 = v_1 + z_1$ and $u_2 = v_2 + z_2$ are two solutions.
  Then $\dv u_1 = \dv u_2$ since they each solve the heat equation with the same initial data.
  By \refL{TemamLemma} it follows that $z_1 = z_2$.
  Rewriting the equations for~$v_1$, $v_2$ as~\eqref{eqnENSEquiv} and subtracting gives
  \begin{equation*}
    \dt (v,\varphi)
    + (v_1 \cdot \grad v + v \cdot \grad v_2 + v \cdot \grad z + z \cdot \grad v,\varphi)
    = -\nu (\grad v,\grad\varphi),
  \end{equation*}
  where $v=v_1-v_2$, and $z = z_1 = z_2$.
  Taking $\varphi=v$ and using Ladyzhenskaya's inequality  gives
  \begin{align*}
    \frac{1}{2} \dt \norm{v}_{L^2}^2 + \nu \norm{\grad v}_{L^2}^2
      &= - \ip{ (v \cdot\grad) v_2}{v} - \ip{ (v \cdot \grad) z}{v} - \ip{(z \cdot \grad) v}{v}\\
      &\leq c \Bigl(
				\begin{multlined}[t]
					\norm{\grad v_2}_{L^2} \norm{v}_{L^2} \norm{\grad v}_{L^2} + \norm{ \grad z}_{L^2} \norm{v}_{L^2}\norm{\grad v}_{L^2} \mathop+\\
						+ \norm{\divergence z}_{L^2} \norm{v}_{L^2}\norm{\grad v}_{L^2}\Bigr)
					\end{multlined}\\
      &\leq \frac{\nu}{2} \norm{\grad v}_{L^2}^2+ \frac{c}{\nu} \pr{\norm{\grad v_2}_{L^2}^2 + \norm{\grad z}_{L^2}^2 +  \norm{\dv u}_{L^2}^2}	\norm{v}_{L^2}^2.
  \end{align*}
  Integrating in time yields
  \begin{align*}
    \norm{v(t)}_{L^2}^2 + \nu \int_0^t \norm{\grad v(s)}_{L^2}^2 \, ds
      \leq \int_0^t \mu(s) \norm{v(s)}_{L^2}^2 \, ds,
  \end{align*}
  where $\mu(s) \defeq \frac{c}{\nu} ( \norm{\grad v_2}_{L^2}^2 + \norm{\grad z}_{L^2}^2 +  \norm{\dv u}_{L^2}^2)$.
  The energy estimate for the heat equation and~\eqref{eqnEnergyBoundWider} show that $\int_0^t \mu(s) \, ds < \infty$.
  Thus Gronwall's lemma applies and shows that $v \equiv 0$, yielding uniqueness.
\end{proof}%}}}

% Jim 8 June 2013: I started being really pedantic here, but backed off a little
\iffalse%{{{
\begin{lemma}
    If $h \in L^1(0, T; H^{-1})$ then $h$ can be identified
    with a distribution $\ol{h} \in \Cal{D}((0, T) \times \Omega)$ in the
    sense that
    \begin{align*}
        \innp{h, \varphi}_{L^1(0, T; H_{-1}), L^1(0, T; H_0^1)}
            = \innp{\ol{h}, \varphi}_{\Cal{D}'((0, T) \times \Omega),
                \Cal{D}((0, T) \times \Omega)}
    \end{align*}
    for all $\varphi \in \Cal{D}((0, T) \times \Omega)$.
\end{lemma}
\begin{proof}
    Let $\varphi(t, x) = \varphi_1(t) \varphi_2(x)$ with
    $\varphi_1 \in \Cal{D}(0, T)$, $\varphi_2 \in \Cal{D}(\Omega)$. Then
    \begin{align*}
        \innp{h, \varphi}_{L^1(0, T; H_{-1}), L^1(0, T; H_0^1)}
            &= \int_0^T \varphi_1(t) \innp{h, \varphi_2(x)}_{H_{-1}, H_0^1} \\
            &= \innp{\ol{h}, \varphi}_{\Cal{D}'((0, T) \times \Omega),
                \Cal{D}((0, T) \times \Omega)}.
    \end{align*}
    Now,
    $\Cal{D}'((0, T) \times \Omega) = \Cal{D}'(0, T) \otimes \Cal{D}'(\Omega)$
\end{proof}
\fi%}}}
\iffalse%{{{\cite{bblTemam}*{Lemma III.1.1}.
The following is from \cite{bblTemam}*{Lemma III.1.1}.
\begin{lemma}\label{lmaTemamIII11}
    Let $X$ be a Banach space with dual $X'$ and let
    $f, g \in L^1(0, T; X)$.
    \begin{itemize}
        \item[(i)]
            For some $\xi \in X$,
            \begin{align*}
                f(t) = \xi + \int_0^t g(s) \, ds
            \end{align*}
            for almost all $t \in [0, T]$.
            
        \item[]
            
        \item[(ii)]
            For all $\eta \in X'$
            \begin{align*}
                \diff{}{t} \innp{f, \eta} = \innp{g, \eta},
            \end{align*}
            where the time derivative is the weak distributional derivative.
    \end{itemize}
    If (i) or (ii) are satisfied then $u \in C(0, T; X)$ after possibly being
    changed on a set of measure zero.
\end{lemma}
\fi%}}}

%
% Section
%
\section{Higher regularity in \texorpdfstring{$2D$}{2D}}\label{sxnStrong}%{{{1
In \refT{HigherRegularity}, we obtain strong solutions by assuming more regularity on the initial data. 
Here we provide the details of the Galerkin approximation that were suppressed in the proof of 
\refT{Existence}.

\begin{theorem}\label{thmHigherRegularity}%{{{
	%\sidenote{Sat 06/02/12 GI: Added bounded.\\ {\color{blue} Sun 08/05/12 Dani: since lifting is done in bounded domains we should put bounded everywhere}}
  Let $\Omega\subset\mathbb{R}^2$ be a bounded $C^2$ domain, and suppose $u_0, f$ satisfy
  \begin{gather*}
    u_0\in H_0^1\cap H^2,
    \quad \dv u_0\in  H^2,
    \quad \grad \dv u_0 \cdot \n = 0,
    \quad \grad\lap (\dv u_0)\cdot \n=0\\
    f \in L^2(0,T;V'),\quad
    \prt_t f\in L^2(0,T;V'),
    \quad f(0)\in L^2.
  \end{gather*}
  If 
  $u$ is the (unique) solution to~\eqref{eqnENSSimpleForm} with initial data $u_0$, then
  $$
    \prt_t u\in L^\iny(0, T; L^2) \cap L^2(0, T; H_0^1).
  $$
  If further $f\in L^\iny(0,T;L^2)$, then $u\in L^\iny(0, T; H^2)$.
\end{theorem}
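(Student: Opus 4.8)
The plan is to keep the decomposition $u = v + z$ from the proof of \refT{Existence} and to upgrade the regularity of each piece separately, exploiting once more that $z$ is slaved to $g = \dv u$ through the heat equation and the lifting of \refL{TemamLemma}. The first task is to record the time regularity of $z$. Since $g$ solves the Neumann heat equation~\eqref{eqngHeat} with $g_0 = \dv u_0 \in H^2$, the first compatibility condition $\grad \dv u_0 \cdot \n = 0$ places $g_0$ in the domain of the Neumann Laplacian, so that $g \in C([0,T];H^2)$ and $\prt_t g$ solves the same heat equation with initial datum $\nu \lap g_0 \in L^2$. The second compatibility condition $\grad \lap(\dv u_0)\cdot \n = 0$ is exactly the Neumann compatibility for this derived problem, yielding $\prt_t g \in C([0,T];L^2)\cap L^2(0,T;H^1)$ and, after pairing against $H^1$ test functions and using the boundary condition, $\prt_t^2 g \in L^2(0,T;\tilde H^{-1})$ with $\norm{\prt_t^2 g}_{\tilde H^{-1}} \leq \nu \norm{\prt_t g}_{H^1}$. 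Feeding these into \refL{TimeRegularityz} (with $m=1$ on the $C^2$ domain) gives $z \in C([0,T];H^2)$ and $\prt_t z \in C([0,T];H^1)\cap L^2(0,T;H^2)$, while applying the weak lifting estimate~\eqref{eqnweaklifting} at almost every time, together with linearity of the lifting, gives $\prt_t^2 z \in L^2(0,T;L^2)$.

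With $z$ under control, I would establish the first conclusion, $\prt_t u \in L^\infty(0,T;L^2)\cap L^2(0,T;H^1_0)$, by proving the analogous bound for $\prt_t v$. Working in the Galerkin approximation of~\eqref{eqnENSEquiv} (whose details are the promised supplement to \refT{Existence}), I would differentiate the finite-dimensional system in time and test with $w_m \defeq \prt_t v_m$. Writing $\widetilde f = f - z\cdot\grad z - \prt_t z$ as in~\eqref{eqntildef}, the only genuinely new forcing term is $\prt_t \widetilde f = \prt_t f - \prt_t z\cdot\grad z - z\cdot\grad \prt_t z - \prt_t^2 z$, and the regularity just obtained for $z$ together with the hypothesis $\prt_t f \in L^2(0,T;V')$ shows $\prt_t\widetilde f \in L^2(0,T;V')$. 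The dominant inertial contribution is estimated in $2D$ by Ladyzhenskaya's inequality, $\abs{\ip{w\cdot\grad v}{w}} \leq C\norm{w}_{L^2}\norm{\grad w}_{L^2}\norm{\grad v}_{L^2} \leq \tfrac{\nu}{4}\norm{\grad w}_{L^2}^2 + \tfrac{C}{\nu}\norm{\grad v}_{L^2}^2\norm{w}_{L^2}^2$, after which Gronwall's lemma with the integrable coefficient $\norm{\grad v}_{L^2}^2$ supplied by the energy bound~\eqref{eqnEnergyBoundWider} closes the estimate uniformly in $m$. The required initial bound on $w_m(0)=\prt_t v_m(0)$ follows from reading the strong form of~\eqref{eqnENSEquiv} at $t=0$, where $v_0 = u_0 - z(0) \in V \cap H^2$ and $\widetilde f(0) = f(0) - z(0)\cdot\grad z(0) - \prt_t z(0) \in L^2$ by the hypotheses $u_0 \in H^2$ and $f(0)\in L^2$. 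Passing to the limit gives $\prt_t v \in L^\infty(0,T;L^2)\cap L^2(0,T;V)$, and adding $\prt_t z \in C([0,T];H^1)\cap L^2(0,T;H^2)$ yields the claim.

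For the second conclusion I would exploit that $v$ solves, at almost every time, the stationary Stokes system $-\nu\lap v + \grad\pi = F$, $\dv v = 0$, with $F = \widetilde f - \prt_t v - v\cdot\grad v - v\cdot\grad z - z\cdot\grad v$. Because $\prt_t v \in L^2(0,T;V)$ and $v \in L^2(0,T;V)$ imply $v \in C([0,T];V)\subset L^\infty(0,T;H^1)$, and because $z \in C([0,T];H^2)$ controls the mixed terms, the only delicate piece of $F$ is the quadratic $v\cdot\grad v$. Invoking the Stokes estimate $\norm{v}_{H^2} \leq C\norm{F}_{L^2}$ (from \refL{TemamLemma}, i.e.\ \eqref{eqnADNL2}) together with Agmon's inequality $\norm{v}_{\linf}\leq C\norm{v}_{L^2}^{1/2}\norm{v}_{H^2}^{1/2}$ in $2D$, one bounds $\norm{v\cdot\grad v}_{L^2} \leq C\norm{v}_{H^2}^{1/2}\norm{v}_{L^2}^{1/2}\norm{\grad v}_{L^2}$ and absorbs the $\norm{v}_{H^2}^{1/2}$ factor by Young's inequality, leaving $\norm{v}_{H^2} \leq C\bigl(\norm{\widetilde f}_{L^2} + \norm{\prt_t v}_{L^2} + \norm{v}_{L^2}\norm{\grad v}_{L^2}^2 + \norm{v}_{H^1}\norm{z}_{H^2} + \norm{z}_{H^2}^2\bigr)$. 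Under the extra hypothesis $f \in L^\infty(0,T;L^2)$ every term on the right lies in $L^\infty(0,T)$, so $v \in L^\infty(0,T;H^2)$; combined with $z \in C([0,T];H^2)$ this gives $u \in L^\infty(0,T;H^2)$.

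I expect the main obstacle to be the time regularity of the slaved component, specifically the control of $\prt_t^2 z$ (equivalently $\prt_t\widetilde f$): differentiating the $v$-equation once in time unavoidably differentiates the lifting twice, a feature with no analogue in the classical Navier--Stokes higher-regularity theory. This is precisely where the second compatibility condition $\grad\lap(\dv u_0)\cdot\n = 0$ is indispensable, since without it $\prt_t g$ would fail the Neumann compatibility needed to place $\prt_t^2 g$ in $L^2(0,T;\tilde H^{-1})$, and the forcing $\prt_t\widetilde f$ would then fall out of $L^2(0,T;V')$.
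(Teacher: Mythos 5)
Your proposal follows the paper's own strategy almost step for step: the decomposition $u = v + z$, the upgraded heat-equation regularity for $g = \dv u$ obtained from the two Neumann compatibility conditions, the lifting estimates for $z$, $\prt_t z$ and $\prt_{tt} z$ (your route to $\prt_{tt}z \in L^2(0,T;L^2)$, via the weak lifting bound \eqref{eqnweaklifting} applied to $\prt_t^2 g \in L^2(0,T;\tilde H^{-1})$, is precisely the paper's Remark~\ref{rmkEvenHighRegularity}), and then the time-differentiated Galerkin estimate tested with $\prt_t v_k$, with $\norm{\prt_t v_k(0)}_{L^2}$ controlled by reading the equation at $t=0$ using $v_0 \in V\cap H^2$ and $\tilde f(0) \in L^2$. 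The paper quotes somewhat stronger lifting regularity ($z \in L^\infty(0,T;H^3)$ from Lemma~\ref{lmaBasiczRegularityH2}), but your weaker $z \in C([0,T];H^2)$, $\prt_t z \in C([0,T];H^1)\cap L^2(0,T;H^2)$ is enough for every term you actually estimate, so this part of your argument is sound.

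The one place you genuinely diverge is the final $L^\infty(0,T;H^2)$ step, and there your argument has a gap as written. The paper avoids absorption entirely: it first shows the Stokes right-hand side lies in $L^\infty(0,T;L^{4/3})$, using Galdi's Helmholtz decomposition of $L^4$ to make sense of the Leray projection against $L^4$ test fields, deduces $v \in L^\infty(0,T;L^\infty)$ from $W^{2,4/3}$ Stokes regularity and Sobolev embedding in 2D, and only then concludes the right-hand side is in $L^\infty(0,T;L^2)$ and hence $v \in L^\infty(0,T;H^2)$. Your route (Agmon's inequality plus Young absorption of the $\norm{v}_{H^2}^{1/2}$ factor) is the classical 2D Navier--Stokes argument, but to subtract $\tfrac12\norm{v}_{H^2}$ from both sides you must already know $\norm{v(t)}_{H^2} < \infty$ at almost every time, which is exactly what is being proved; applied directly to the limit solution $v$ the absorption is circular. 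The fix is available inside your own framework: run the estimate on the Galerkin approximations, testing with $Av_k$ (legitimate, since $Av_k$ lies in the span of the eigenfunctions $w_1,\dots,w_k$), where every quantity is finite, and pass to the limit; alternatively, perform the paper's $L^{4/3}$ bootstrap once to secure $v(t)\in H^2$ and use absorption only for the quantitative bound. With that repair your proof is complete; the trade-off is that the paper's bootstrap is rigorous directly at the level of the limit solution at the cost of invoking $L^4$ Helmholtz/Stokes theory, while your version stays closer to the standard energy-method regularity argument.
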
%}}}
\begin{proof}%{{{
 As before, let $g = \divergence u$, let $z$ be the solution of~\eqref{eqnzq}, and let $v = u - z$.

%\sidenote{Sat 06/02/12 GI: Needs rewriting. If the referee thinks it's ``just standard stuff'' he'll be very critical.{\color{blue} Mon 08/06/12 Dani: Changed a bit.}}

%Analogously to the case of the standard Navier-Stokes (see, for instance, \cite{bblTemam}*{Theorem III.3.5 and III.3.6}),
Lemma~\ref{lmaBasiczRegularityH2} and  Remark~\ref{rmkEvenHighRegularity} guarantee that $z \in L^\iny(0, T; H^3)$ with $\prt_t z \in L^\iny(0, T; H^1) \cap L^2(0,T;H^2)$. Thus  in order to  show that $\prt_t u\in L^\iny(0, T; L^2) \cap L^2(0, T; H_0^1)$ we  only need to show that $\prt_t v \in L^\iny(0, T; H) \cap L^2(0, T; V)$. Furthermore we will show that when $f \in L^\iny(0,T;L^2)$ we have $v \in L^\iny(0, T; H^2)$.

\iffalse
  The system \eqref{eqnENSEquiv} can be regarded as a perturbed Navier-Stokes equation and one can easily see that although \eqref{eqnENSEquiv} was derived assuming smoothness, it makes sense for $z$ as in \eqref{weakzReg} and 
  \begin{equation}\label{weakvReg}
    v\in L^\infty(0,T;H)\cap L^2(0,T;V)
  \end{equation}
Thus defining $u: =v+z$  we obtain a weak solution of the \extended Navier-Stokes equations as in \refD{WeakS}, in the sense of Definition~\ref{dfnWeakS}. We aim thus to prove the existence of solutions $v$ of \eqref{eqnENSEquiv} satisfying \eqref{weakvReg}.
  \fi

 %Note that, if $u,v,w\in V$, then 
 %$b(u,v,w)=\langle B(u, v),w\rangle$, where the bilinear form $B(u,v)$ is defined by 
 %$B(u,v)=P(u\cdot\grad v)$.
  
  Let $w_k=w_k(x)$ for $k=1,2,\dots$ be the $L^2$-orthonormal eigenfunctions of the Stokes operator, $A \defeq -\LP\Delta$, with the eigenvalues $\lambda_1, \lambda_2 \dots$, respectively.
  For $u,v,w\in H_0^1(\Omega)$, define the trilinear form, $b(u,v,w) = \ip{u \cdot \grad v}{w}$, where $\ip{\cdot}{\cdot}$ denotes the 
  usual inner-product on $L^2(\Omega)$.
  For each $k\in\N^*$ define the approximate solution $v_k$ by
  \begin{align*}
    v_k \defeq \sum_{j=1}^kg_{jk}(t)w_j,
  \end{align*}
  where the coefficients $g_{jk}(t)$ are chosen so that $v_k$ solves
  \begin{equation}\label{eqnENSEquivGalerkin}
    (\prt_t v_k, w_j)  + \nu (Av_k,w_j) +b(v_k,v_k,w_j) 
      =\langle \widetilde{f} ,w_j\rangle 
    -b(v_k, z,w_j) - b(z, v_k,w_j)
  \end{equation}
  with initial data $v_k(0) = v_{0k} \defeq \sum_1^k g_{jk}(0) w_j$,
  $g_{jk}(0) = \ip{u_0}{w_j}$,
  and $\widetilde{f} \defeq f - z \cdot \grad z - \prt_t z$.
  % \JimNote{5 June 2013}{I added what $g_{jk}(0)$ is above (someone deleted
  % this). Note it doesn't depend on $k$.} 

  This reduces to the nonlinear system of $k$ ODEs,
  \begin{multline}\label{eqnENSEquivGalerkinCoef}
    g_{jk}'(t)+\nu\lambda_jg_{jk}+\sum_{r=1}^k\sum_{s=1}^kb(w_r,w_s,w_j)g_{rk}g_{sk}\\
      =\langle\tilde{f},w_j\rangle-\sum_{r=1}^k b(w_r,z,w_j)g_{rk} -\sum_{r=1}^kb(z,w_r,w_j)g_{rk},
  \end{multline}
  with initial data $g_{jk}(0)=(v_{k}(0),w_j)$ for $j=1,\dots, k$.
  Standard ODE theory shows that~\eqref{eqnENSEquivGalerkinCoef} has a unique absolutely continuous solution, $g_{ij}(t)$, for $i,j \in \{1,\dots k\}$.

  Multiplying \eqref{eqnENSEquivGalerkinCoef} by $g_{jk}$ and summing for $j=1,\dots, k$ gives
  \begin{align*}
    \frac{1}{2} \dt \norm{v_k}_{L^2}^2
    + \nu \norm{\grad v_k}_{L^2}^2
    = \langle\widetilde{f}, v_k\rangle 
    - b(v_k, z, v_k) 
    - b(z, v_k, v_k),
  \end{align*}
  where we used $b(v_j,v_k,v_k)= 0$ and $(Av_k,v_k)=(\grad v_k,\grad v_k)$.

Multiplying \refE{ENSEquivGalerkin} by $g_{jk}'$ and summing for $j=1,\cdots, k$, we have
\begin{align*}
		\norm{v_k'}_{L^2}^2 + \nu(\Delta v_k, v_k') + b(v_k,v_k,v_k') 
			= \langle\widetilde{f},v_k'\rangle
				- b(v_k, z, v_k') 
				- b(z, v_k, v_k').
\end{align*}
Thus, at time $t=0$, we have 
\begin{align*}
		&\norm{v_k'(0)}_{L^2} \leq \nu\norm{\Delta v_k(0)}_{L^2} + \norm{B(v_k(0),v_k(0))}_{L^2} \\
			&\qquad\qquad+ \smallnorm{\widetilde{f}(0)}_{L^2}
				+ \norm{B(v_k(0), z(0))}_{L^2} + \norm{B(z(0), v_k(0))}_{L^2},
\end{align*}
where we used the
%properties of the Leray projector and 
Cauchy-Schwarz inequality and the standard notation, $B(a,b) \defeq \LP(a\cdot\nabla b)$. Observe the terms on right side are bounded since $v_0\in H^2\cap V$, 
$f(0)\in L^2$, and the high enough  regularity of $z$ as given by Lemma~\ref{lmaBasiczRegularityH2} and Remark~\ref{rmkEvenHighRegularity}.

We now differentiate \eqref{eqnENSEquivGalerkin} in time, multiply by $g_{jk}'$ and sum for $j=1,\cdots, k$ to obtain

\begin{align*}
	\frac{1}{2} \dt& \norm{v_k'}_{L^2}^2
		+ \nu \norm{\grad v_k'}_{L^2}^2
		+b(v_k',v_k,v_k')
		+b(v_k,v_k',v_k') \\
		&= \langle\prt_t\widetilde{f}, v_k'\rangle
		- b(v_k',  z, v_k')
		-b(v_k, \prt_t z, v_k') \\
		&\qquad
		- b(\prt_t z, v_k, v_k')
		-b(z, v_k', v_k').
\end{align*}
Using Ladyzhenskaya's inequality, we have
\begin{align*}
	%\abs{(\LP(\prt_t v \cdot \grad v),\prt_t v)}
	%	= \abs{(\prt_t v \cdot \grad v,\prt_t v)}
	b(v_k',v_k,v_k')
		\leq \frac{\nu}{10} \norm{\grad v_k'}_{L^2}^2
		+C\nu^{-1}\norm{\grad v_k}_{L^2}^2\norm{v_k'}_{L^2}^2,
\end{align*}
and similar estimates hold for the other trilinear terms. Thus,
\begin{align*}
	&\frac{1}{2} \dt \norm{v_k'}_{L^2}^2
		+ \frac{\nu}{2} \norm{\grad v_k'}_{L^2}^2\\
		&\qquad
		\leq \langle \prt_t\widetilde{f}, v_k'\rangle
			+C\nu^{-1}\left(\norm{\grad v_k}_{L^2}^2
				+\norm{z}_{H^1}^2\right)\norm{v_k'}_{L^2}^2
			+C\nu^{-1}\norm{\prt_t z}_{H^1}^2\norm{\grad v_k}_{L^2}^2,
\end{align*}	
where we used Poincar\'e's inequality.

\iffalse In order to estimate the first term on the right side, it suffices to show that $\tilde{f}_t\in L^2(0,T;V')$.\fi
 
Recall that $\prt_t\tilde{f}=  \prt_tf-\prt_t z\cdot \grad z-z\cdot \grad \prt_t z-\prt_{tt}z$. By Remark~\ref{rmkEvenHighRegularity} in the Appendix, if $\nabla\cdot u_0\in H^2$, then $\prt_{tt}z\in L^2(0,T;L^2)$. Hence,
\begin{align*}
	|\langle \prt_t\widetilde{f}, v_k'\rangle|
		&\leq\frac{\nu}{4} \norm{\grad v_k'}_{L^2}^2 
		+ C\nu^{-1}\norm{f_t}_{H^{-1}}^2 
		+ C\nu^{-1}\norm{z}_{H^1}^2\norm{\prt_t z}_{H^1}^2 
		%&\qquad\qquad
		+ C\nu^{-1}\norm{\prt_{tt}z}_{L^2}^2,
\end{align*}	
where we also used Poincar\'e inequality. 
We arrive at the a~priori estimate,
\begin{align*}
	& \dt \norm{v_k'}_{L^2}^2
		+ \nu \norm{\grad v_k'}_{L^2}^2
		\leq C\nu^{-1}\left(\norm{\grad v_k}_{L^2}^2
				+\norm{z}_{H^1}^2\right)\norm{v_k'}_{L^2}^2
		+C\nu^{-1}\phi(t),		
\end{align*}	
where
$\phi(t)=\norm{\prt_t z}_{H^1}^2\norm{\grad v_k}_{L^2}^2
		+\norm{\prt_t f}_{H^{-1}}^2 
		+\norm{z}_{H^1}^2\norm{\prt_t z}_{H^1}^2
		+\norm{\prt_{tt}z}_{L^2}^2$.
Therefore, by Gronwall's inequality,
\begin{align*}
	&\norm{v_k'(t)}_{L^2}^2
		+\nu\int_0^t\norm{\grad v_k'(s)}_{L^2}^2\,ds
		\leq\left(\norm{v_k'(0)}_{L^2}^2+C\nu^{-1}\int_0^t\phi(s)\,ds\right) \\
		&\qquad\qquad\qquad\qquad\qquad
		\times\exp\left(C\nu^{-1}\int_0^t \left(\norm{\grad v_k(s)}_{L^2}^2
				+\norm{z(s)}_{H^1}^2\right)\,ds\right).
\end{align*}
Hence, $v_k' \in L^\iny(0, T; H) \cap L^2(0, T; V)$ and the first part of the assertion is established. 

Now, assume additionally that $f\in L^\iny(0,T;L^2)$. It follows from \refE{ENSEquiv} that

\begin{align}\label{eqnStokesRegNonlinearity}
	\nu(\grad v(t),\grad \bar{v})=(h(t),\bar{v})
\end{align}
for all $\bar{v}\in V$, where 
\begin{align*}
	h(t)=-\prt_tv
		-\LP(v\cdot \grad v)
		+\LP\tilde{f}
		-\LP (v\cdot \grad z)
		-\LP (z\cdot \grad v). 
\end{align*}
Since $\LP \bar{v} = \bar{v}$ and $\LP$ is self-adjoint, we have
\begin{align}\label{eqnLPvvvbarBound}
	\begin{split}
		b(v(t), v(t)), \bar{v})
			&\leq C\norm{v(t)}_{L^4}
				\norm{\grad v(t)}_{L^2}
				\norm{\bar{v}}_{L^4}
			\leq C \norm{\grad v}_{L^2}^2\norm{\bar{v}}_{L^4} \\
			&\leq C \norm{\bar{v}}_{L^4}.
		\end{split}
\end{align}
Here we used $v\in L^\iny(0,T;V)$, which follows from the fact that $v, \delt v \in L^2( 0, T; V )$.

Following Galdi \cite{bblGaldi}, let $\mathcal{H}_4(\Omega)$ be the completion in the $L^4$-norm of the divergence free vector fields in $C_c^\iny(\Omega)$ and
\begin{align*}
	\mathcal{G}_4(\Omega)
		= \set{u \in L^4(\Omega) \colon u
		= \grad p \text{ for some } p \in W^{1, 4}(\Omega)}.
\end{align*}
Now let $w$ be any vector field in $L^4(\Omega)$. By the Helmholtz-Leray decomposition of $L^4$ (see Equation III.1.5 and Remark III.1.1 of \cite{bblGaldi}), $w = \bar{\bar{v}} + \grad q$ for some $\bar{\bar{v}}$ in $\mathcal{H}_4(\Omega)$ and $\grad p$ in $\mathcal{G}_4(\Omega)$, with $\norm{\bar{\bar{v}} }_{L^4} \leq C \norm{w}_{L^4}$. Since $\Omega$ is bounded, it follows that $\bar{\bar{v}} = \LP w$ and that $\grad q$ lies in $L^2(\Omega)$.

Then by \refE{LPvvvbarBound},
\begin{align*}
	b(v(t), v(t), w)
		&= b(v(t), v(t), \bar{\bar{v}})
		\leq C \norm{\bar{\bar{v}} }_{L^4}
		\leq C \norm{w}_{L^4}.
\end{align*}

What this shows is that $\LP (v\cdot \grad v)\in L^\iny(0,T;L^{4/3})$.
Similarly, we get that $\LP (v\cdot \grad z)$ and $\LP (z\cdot \grad v)$ are in $L^\iny(0,T;L^{4/3})$. Also, we have $\LP \tilde{f}-\prt_t v \in L^\iny(0,T;L^2)$. Hence, $h(t)\in L^\iny(0,T;L^{4/3})$. Then by Proposition I.2.2 of \cite{bblTemam} applied to the Stokes problem \eqref{eqnStokesRegNonlinearity} and the Sobolev embedding theorem we conclude that $v\in L^\iny(0,T;L^\iny)$. 

Now, we can use, in place of \refE{LPvvvbarBound}, the bound,
\begin{align*}
	b(v(t), v(t),\bar{v})
		&\leq C\norm{v(t)}_{L^\iny}
			\norm{\grad v(t)}_{L^2}
			\norm{\bar{v}}_{L^2},
\end{align*}  
which implies that $\LP (v\cdot \grad v)\in L^\iny(0,T;L^2)$, and so $h(t)\in L^\iny(0,T;L^2)$. Another application of Proposition I.2.2 in \cite{bblTemam}  to \eqref{eqnStokesRegNonlinearity} leads to $v \in L^\iny(0,T;H^2)$, finishing the proof.
\end{proof}%}}}
\begin{remark*}%\label{rmkInfRegularity}%{{{
If we assume that $\Omega$ is of class $C^\iny$ and $f\in C^\iny(\Omega\times [0,T])$, then the solution $u$ is in $C^\iny(\Omega\times (0,T])$ provided that the data satisfies suitable compatibility conditions. Also, one can obtain more regular solutions in the 3D case if the given data is, in addition, sufficiently small. This can all be accomplished along the lines laid out by Temam in Remarks III.3.7 and III.3.8 and Theorems III.3.7 and III.3.8 of \cite{bblTemam}.
\end{remark*}%}}}

\begin{proposition}\label{ppnStrongHoldsae}
	Let $u$ be a strong solution to the extended Navier-Stokes equations given by \refT{HigherRegularity}
	with $f \in L^\iny(0, T; L^2)$.
	Then~$u \in C( [ 0, T ]; H^1_0)$ and satisfies~\eqref{eqnENSSimpleForm} as distributions and almost everywhere on $[0, T] \times \Omega$.
\end{proposition}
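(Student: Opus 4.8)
The plan is to use the strong regularity supplied by \refT{HigherRegularity}---namely $u\in L^\iny(0,T;H^2)$ and $\prt_t u\in L^\iny(0,T;L^2)\cap L^2(0,T;H_0^1)$, together with the fact that $g=\dv u$ solves the Neumann heat equation \refE{gHeat}---to upgrade the weak formulation \refE{WeakSu} to a pointwise-in-time $L^2$ identity. For the continuity assertion, I would observe that $\prt_t u\in L^2(0,T;H_0^1)\subset L^1(0,T;H^1)$ and $u\in L^\iny(0,T;H^2)\subset L^1(0,T;H^1)$, so $u\in W^{1,1}(0,T;H^1)$. The fundamental theorem of calculus for Bochner-integrable functions then furnishes an absolutely continuous representative $u\in C([0,T];H^1)$, and since $u(t)\in H_0^1$ for a.e.\ $t$ and $H_0^1$ is closed in $H^1$, this representative takes values in $H_0^1$. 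Hence $u\in C([0,T];H_0^1)$.

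Next, to recover the equation I would test \refE{WeakSu} against $\varphi\in V$ and integrate by parts. Because $\prt_t u\in L^\iny(0,T;L^2)$, the weak time derivative $\dt(u,\varphi)$ equals $(\prt_t u,\varphi)$ for a.e.\ $t$; because $u\in H^2$ and $\varphi\in H_0^1$, we may write $-\nu(\grad u,\grad\varphi)=\nu(\Delta u,\varphi)$ with no boundary term. Thus, for a.e.\ $t$, the residual $R\defeq \prt_t u+(u\cdot\grad)u-\nu\Delta u-f$ lies in $L^2(\Omega)$ (all four terms do, using the two-dimensional embedding $H^2\hookrightarrow L^\iny$ to control the nonlinearity) and satisfies $(R,\varphi)=0$ for every $\varphi\in V$. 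Since $V$ is dense in $H$ for the $L^2$ norm, $R$ is $L^2$-orthogonal to $H$, so $\LP R=0$.

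The key step is then the identity $(I-\LP)\prt_t u=\nu\grad\dv u$, which is where the stabilizing term is regenerated. To establish it, use the Helmholtz--Leray decomposition $\prt_t u=\LP\prt_t u+\grad\phi$, in which $\grad\phi$ is determined, up to an additive constant in $\phi$, by the Neumann problem $\Delta\phi=\dv\prt_t u$ in $\Omega$ and $\n\cdot\grad\phi=\n\cdot\prt_t u$ on $\prt\Omega$. For a.e.\ $t$ one has $\prt_t u\in H_0^1$, so $\n\cdot\prt_t u=0$; moreover $\dv\prt_t u=\prt_t\dv u=\nu\Delta\dv u$ by \refE{gHeat}, whose Neumann condition also gives $\n\cdot\grad\dv u=0$. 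Hence $\phi=\nu\dv u$ solves the same Neumann problem, and uniqueness forces $\grad\phi=\nu\grad\dv u$. Expanding $0=\LP R=\LP\prt_t u+\LP[(u\cdot\grad)u]-\nu\LP\Delta u-\LP f$ and substituting $\LP\prt_t u=\prt_t u-\nu\grad\dv u$ rearranges exactly to \refE{ENSSimpleForm}. Since every term of \refE{ENSSimpleForm} belongs to $L^\iny(0,T;L^2)$, the identity holds as an equality in $L^2(\Omega)$ for a.e.\ $t$, hence almost everywhere on $[0,T]\times\Omega$ and, a fortiori, in the sense of distributions.

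I expect the main obstacle to be the identity $(I-\LP)\prt_t u=\nu\grad\dv u$: it is precisely the mechanism by which the projected formulation, which tests only against $V$ and so a priori discards all gradient information, nonetheless recovers the extra $\nu\grad\dv u$ term of \refE{ENSSimpleForm}. Its proof hinges on combining the (time-differentiated) no-slip condition---encoded in the membership $\prt_t u\in H_0^1$---with the Neumann heat equation satisfied by $\dv u$, and on the uniqueness of the associated Neumann problem. The remaining points, namely identifying the weak time derivative with $(\prt_t u,\varphi)$ and justifying the integration by parts, are routine given the available regularity.
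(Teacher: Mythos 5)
Your proposal is correct, and it rests on the same two ingredients as the paper's proof: testing \refE{WeakSu} against $V$ (dense in $H$), and the heat equation satisfied by $\dv u$. The packaging differs, though. The paper forms the residual of the \emph{projected} equation directly, $U = \prt_t u + \LP (u \cdot \grad u - f) - \nu(\LP \Delta u + \grad \dv u)$, and shows $U$ is $L^2$-orthogonal both to $V$ (via \refE{WeakSu}) and to every gradient $\grad q$, $q \in H^1$ (via the \emph{weak} divergence equation \refE{WeakSdivergence} and $\prt_t u \in H^1_0$); since $L^2 = H \oplus \set{\grad q}$, this forces $U = 0$ in two lines. You instead form the unprojected residual $R$, get $\LP R = 0$, and then isolate the identity $(I - \LP)\prt_t u = \nu \grad \dv u$ by identifying the gradient part of $\prt_t u$ through the Neumann problem and invoking uniqueness, using the \emph{strong} form \refE{gHeat} of the heat equation (pointwise Laplacian plus trace of the Neumann condition). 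That identity is exactly the content of the paper's orthogonality step $\ip{U}{\grad q} = 0$, so the two proofs are mathematically equivalent; your version makes the mechanism by which the $\nu \grad \dv u$ term is regenerated more transparent, at the cost of needing the extra regularity of $g = \dv u$ (available here from Lemma~\ref{lmaHigherHeatRegularity}) to make sense of the strong Neumann problem. Note that you could shortcut your key step: the weak characterization of the Helmholtz decomposition says $(I-\LP)\prt_t u = \grad \phi$ where $(\grad \phi, \grad \psi) = (\prt_t u, \grad \psi) = -(\prt_t \dv u, \psi)$ for all $\psi \in H^1$, and \refE{WeakSdivergence} says precisely that $\phi = \nu \dv u$ satisfies this, so no appeal to the strong equation or to classical Neumann uniqueness is needed.
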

\begin{proof}
	That $u \in C( [ 0, T ]; H^1_0)$ follows directly from $u\in H^1(0, T; H^1_0)$, applying, for instance,
	Theorem 2 of \cite{bblEvansBook}*{\S5.9.2}.
	% Thu 04/04 GI: This addresses Bob's concern about jumping arbitarily between stationary states.
	To see~$u$ satisfies~\eqref{eqnENSSimpleForm} almost everywhere, Theorem~\ref{thmHigherRegularity} gives $u \in H^2$ for almost all $t \in [0, T]$.
	Using standard estimates this implies $(u \cdot \grad) u \in L^2$, and consequently both $\lhp (u \cdot \grad) u$ and $\lhp \lap u$ are defined.
	Thus, setting $U = \prt_t u + \LP (u \cdot \grad u-f) - \nu(\LP \Delta u + \grad \dv u)$, equation~\eqref{eqnWeakSu} forces $\ip{U}{\varphi} = 0$ for all $\varphi \in V$.
	Further, equation~\eqref{eqnWeakSdivergence} gives $\ip{U}{\grad q} = 0$ for all $q \in H^1$.
	Since $U \in L^2(\Omega)$ (by Theorem~\ref{thmHigherRegularity}) this implies $U = 0$ almost everywhere, and that~\eqref{eqnENSSimpleForm} is satisfied as distributions.
\end{proof}

%
% Section
%
\section{Well-posedness for the Shirokoff-Rosales system}\label{sxnSR}%{{{1

In this section we consider the pressure-Poisson system in~\cite{bblShirokoffRosales}*{equations (20), (A.4), and Appendix~A}, which was introduced to provide a high-order, efficient time-discrete scheme for the incompressible Navier-Stokes equations in irregular domains.
The formal limit of their time discrete scheme satisfies the equations
%given in the first example in Section 2 of \cite{bblJohnstonLiu}, which can be written,
\begin{equation}\label{eqnSRu}
  \begin{beqn}
    \prt_t u + (u \cdot \grad) u - \nu \lap u + \grad p = f & in $\Omega$,\\
    u \cross \n = 0 & on $\prt \Omega$,\\
    \divergence u = 0 & on $\prt \Omega$,
  \end{beqn}
\end{equation}
and
\begin{equation}\label{eqnSRp}
  \begin{beqn}
    \lap p =-\divergence\left((u\cdot\nabla)u\right)+ \divergence f & in $\Omega$,\\
    \grad p \cdot \n = \left( \nu \lap u-(u\cdot\nabla) u + \lambda u + f \right) \cdot \n - \mathcal C & on $\prt \Omega$.
  \end{beqn}
\end{equation}
Here $f$ is the external forcing, $\nu > 0$ is the viscosity, $\lambda > 0$ is an artificial damping parameter, and $\mathcal C = \mathcal C(t)$ is defined by
\begin{equation}\label{eqnSRC}
  \mathcal C
    = \frac{1}{\abs{\prt \Omega}} \int_{\prt \Omega} (\nu \lap u + \lambda u) \cdot \n
    = \frac{1}{\abs{\prt \Omega}} \int_{\Omega} \bigl( \nu \lap \divergence u + \lambda \divergence u \bigr),
\end{equation}
which is exactly the compatibility condition required to solve~\eqref{eqnSRp}.
A similar system (with an additional boundary condition) appeared in~\cite{bblJohnstonLiu}*{\S2}.
Our aim is to study the well-posedness of these equations.

We begin by observing that the system~\eqsysSR formally reduces to the Navier-Stokes equations if the initial data $u_0$ satisfies the compatibility conditions $\divergence u_0 = 0$ in $\Omega$, and $u_0 \cdot \n = 0$ on $\prt \Omega$.
To see this, note that the evolution for $u$ in~\eqref{eqnSRu}\textsubscript{1} is the same as that in the Navier-Stokes equations.
Further, equation~\eqref{eqnSRu}\textsubscript{2} gives the tangential no-slip boundary conditions.
What we are missing, however, is the incompressibility constraint and the normal boundary condition.

First, to recover the incompressibility constraint, take the divergence of~\eqref{eqnSRu} and use~\eqref{eqnSRp}. This yields
\begin{equation}\label{eqnSRdivergence}
  \begin{beqn}
    \prt_t \divergence u - \nu \lap \divergence u = 0 & in $\Omega$,\\
    \divergence u = 0 & on $\prt \Omega$.
  \end{beqn}
\end{equation}
Thus if $\divergence u_0 = 0$, then for all $t \geq 0$ we must have $\divergence u = 0$ identically in $\Omega$, not just on $\prt \Omega$, recovering the incompressibility constraint.

Next, to recover the normal boundary condition, formally apply~\eqref{eqnSRu} for $u \cdot \n$ on $\prt \Omega$.
Combined with the boundary condition for $p$ in~\eqref{eqnSRp}, this yields
% under the assumption that the second order derivatives in space and first order in time are continuous in space up to the boundary,
\begin{equation}\label{eqnSRuNormal}
	\prt_t (u \cdot \n) + \lambda u \cdot \n = \mathcal C \quad\text{on }\prt \Omega.
\end{equation}
Now if $\divergence u_0 = 0$, then the above argument shows $\divergence u = 0$ identically, and equation~\eqref{eqnSRC} forces $\mathcal C=0$.
Thus assuming initially $u_0 \cdot \n = 0$ on $\prt \Omega$, equation~\eqref{eqnSRuNormal} will imply $u \cdot \n = 0$ on $\prt \Omega$ for all $t > 0$, which recovers the missing normal boundary condition.
\medskip

%To motivate the definition of a weak solution, we begin with a few formal observations.
We begin our study of well-posedness to~\eqsysSR by defining weak solutions.
Following the approach used for the \extended Navier-Stokes system of Definition~\ref{dfnWeakS}, we define weak solutions to~\eqsysSR by testing the divergence free part, and the divergence separately.
In addition, however, we must impose \eqref{eqnSRuNormal}, which is an automatic consequence of~\eqsysSR for smooth enough solutions.

\begin{definition}\label{dfnSRWeakS}%{{{
  We define a {\it weak solution} of~\eqsysSR to be a function $u$ such that 
	% \sidenote{The regularity requirement on $\dv u$ is so that $\mathcal C$ makes sense.}%
	\begin{gather*}
    u\in L^\iny(0, T; L^2) \cap L^2(0, T; H^1),\\
    \divergence u\in L^\infty(0,T;L^2)\cap L^2(0,T;H^1_0)\cap L^1(0, T; W^{2,1} )
	\end{gather*}
  and equations~\eqref{eqnWeakSu}, \eqref{eqnWeakSdivergence} hold for almost all $t \in (0, T)$, every test function $\varphi\in V$, and every test function $q\in H^1_0$.
Further, on $\prt\Omega$, $u$ satisfies the boundary condition~\eqref{eqnSRu}\textsubscript{2} and the ODE~\eqref{eqnSRuNormal}, where $\mathcal C$ is given by~\eqref{eqnSRC}.
\end{definition}%}}}

% Thu 04/04 GI: I tried to get rid of~\eqref{eqnSRuNormal} from the definition of weak solutions without much success! The issue I had (again) is that the pressure term is too high order and messes me up. (Jim and I have details squiggled on a board which Jim took a picture of. There, however, is an error with one of the equations in the picture.)

The global existence of weak solutions to~\eqsysSR can be proved in a manner similar to Theorem~\ref{thmExistence}.
For $C^2$ domains, the results and proof exactly parallel Theorem~\ref{thmExistence}, and we address this in Remark~\ref{rmkC2Domains} below.
We are not able to treat arbitrary Lipschitz domains, however, and 
require an additional assumption on the regularity of the domain; we present the details below.
%and consequently does not work in polygonal domains.
We also remark that one can use the same methods to prove regularity of weak solutions to~\eqsysSR analogous to~Theorem~\ref{thmHigherRegularity}.
%We present some of the details here.
\begin{theorem}\label{thmSRExistence}%{{{
  For $d = 2, 3$, let $\Omega \subset \R^d$, be a bounded Lipschitz domain such that $\n \in H^{1/2}(\del \Omega)$.
	Assume
  $$
    u_0\in L^2(\Omega),\quad \,\divergence u_0\in H^2(\Omega), \quad
    (u_0\cdot \n) \n \in H^{1/2}(\prt\Omega),
    \quad\text{and}\quad
    f\in L^2(0,T;V').
  $$
  There exists a weak solution, $u$, to~\eqsysSR with initial data $u_0$ such that $\divergence u \in C^\infty( \Omega \times (0,T) )$ 
  and this solution is unique if $d = 2$. 
  \end{theorem}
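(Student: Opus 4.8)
The plan is to mirror the strategy of \refT{Existence}, decomposing $u = v + z$ into a divergence-free part $v \in V$ and a lifting $z$, but now accounting for the different boundary conditions of \eqsysSR. The crucial observation is again that everything except $v$ is determined by the data alone. Testing \refE{WeakSdivergence} against $q \in H^1_0$ shows that $g \defeq \divergence u$ solves the heat equation with \emph{Dirichlet} (rather than Neumann) boundary conditions and initial value $g_0 = \divergence u_0 \in H^2$; parabolic interior regularity then gives $g \in C^\infty(\Omega \times (0,T))$, and, exactly as in Case 2 of \refT{Existence}, $\prt_t g$ solves the same equation with $L^2$ initial value $\nu \lap \divergence u_0$, so that both $g$ and $\prt_t g$ lie in $C([0,T]; L^2)$. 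The quantity $\mathcal C(t)$ of \refE{SRC} is then determined by $g$ and, since $\int_\Omega \lap g = \nu^{-1} \int_\Omega \prt_t g$, is a bounded continuous function of $t$.

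Next I would recover the boundary trace of $u$. Unlike the no-slip case, here $u \times \n = 0$ while $u \cdot \n$ need not vanish; instead it obeys the linear ODE \refE{SRuNormal}. Solving it explicitly,
\[
  (u \cdot \n)(t) = e^{-\lambda t}(u_0 \cdot \n) + \int_0^t e^{-\lambda(t-s)} \mathcal C(s)\,ds,
\]
so that $h(t) \defeq (u\cdot\n)\n = e^{-\lambda t}(u_0\cdot\n)\n + \bigl( \int_0^t e^{-\lambda(t-s)}\mathcal C(s)\,ds \bigr)\n$. The assumptions $(u_0\cdot\n)\n \in H^{1/2}(\prt\Omega)$ and $\n \in H^{1/2}(\prt\Omega)$ then guarantee $h \in C([0,T]; H^{1/2}(\prt\Omega))$, and from \refE{SRuNormal} we also get $\prt_t h = -\lambda h + \mathcal C\,\n \in C([0,T]; H^{1/2}(\prt\Omega))$. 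I would then define $z$ as the solution of the stationary Stokes system $-\lap z + \grad q = 0$, $\divergence z = g$, with the inhomogeneous boundary data $z = h$ on $\prt\Omega$, which exists by \refL{Galdi} (its compatibility condition is precisely the divergence theorem). Since the lifting $(g,h)\mapsto z$ is linear and bounded from $L^2(\Omega)\times H^{1/2}(\prt\Omega)$ into $H^1(\Omega)$, the time regularity of $g$ and $h$ yields $z, \prt_t z \in C([0,T]; H^1)$, paralleling the conclusion of Case 2 of \refT{Existence}.

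With $z$ in hand, I set $v = u - z$ and test \refE{SRu}\textsubscript{1} against $\varphi \in V$: the pressure term drops out and the normal-derivative boundary term vanishes because $\varphi \in H^1_0$, and a computation identical to the one leading to \refE{ENSEquiv} shows that $v$ solves exactly the same perturbed Navier--Stokes system with forcing $\widetilde f = f - z\cdot\grad z - \prt_t z$. Since $\prt_t z \in L^2(0,T;L^2) \subset L^2(0,T;V')$ and $\norm{z\cdot\grad z}_{V'} \leq \norm{\grad z}_{L^2}^2$, we have $\widetilde f \in L^2(0,T;V')$; the energy bounds \refE{EnergyBoundWider} and \refE{EnergyBoundWider3D} together with the Galerkin construction of \refS{Strong} then apply verbatim to produce $v \in C([0,T];V') \cap L^2(0,T;V)$, and hence a weak solution $u = v + z$. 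For uniqueness when $d = 2$, two weak solutions share the same $g$ (same Dirichlet heat equation) and the same $\mathcal C$, hence the same $h$ and, by \refL{Galdi}, the same $z$; the difference $v = v_1 - v_2$ obeys the difference equation, and the Ladyzhenskaya--Gronwall argument of \refP{2DWeakUniqueness} forces $v \equiv 0$.

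I expect the main obstacle to be the treatment of the inhomogeneous, time-dependent boundary data $h$ in a merely Lipschitz domain: the lifting $z$ is only $H^1$-regular (not $H^2$), so one cannot differentiate the equations as in a smooth domain, and the regularity of $\prt_t z$ must instead be extracted from the linearity of the lifting combined with the $H^2$ smoothing of the divergence heat equation. It is precisely the requirement of keeping $h = (u\cdot\n)\n$ in $H^{1/2}(\prt\Omega)$ for all time — which forces the hypotheses $(u_0\cdot\n)\n \in H^{1/2}(\prt\Omega)$ and $\n \in H^{1/2}(\prt\Omega)$ — that obstructs extending the argument to arbitrary Lipschitz domains.
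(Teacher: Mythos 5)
Your proposal follows essentially the same route as the paper's proof: the Dirichlet heat equation for $g = \divergence u$, the ODE \refE{SRuNormal} solved by Duhamel for the normal trace, the inhomogeneous Stokes lifting $z$ with $\divergence z = g$ and $z = h\,\n$ on $\prt\Omega$ via \refL{Galdi}, the perturbed Navier--Stokes argument of \refT{Existence} for $v = u - z$, and the same 2D uniqueness argument.

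There is, however, one step whose justification is wrong as stated: you claim the compatibility condition of \refL{Galdi}, namely $\int_\Omega g(t) = \int_{\prt\Omega} h(t)$, ``is precisely the divergence theorem.'' That is true only at $t = 0$, where $g_0 = \divergence u_0$ and $h_0 = u_0 \cdot \n$ are the divergence and normal trace of the \emph{same} field $u_0$. For $t > 0$, $g$ and $h$ are produced by two independent evolution problems (the heat equation \refE{SRgHeat} and the ODE \refE{SRh}), and there is as yet no vector field whose divergence is $g(t)$ and whose normal trace is $h(t)$ --- constructing such a field is exactly what \refL{Galdi} is being invoked for, so an appeal to the divergence theorem at positive times is circular. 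The identity must instead be derived from the structure of the equations: using \refE{SRgHeat}, \refE{SRh}, and the definition of $\bar{\mathcal C}$, one computes
\begin{equation*}
	\dt \left( \int_{\del \Omega} h - \int_\Omega g \right) + \lambda \left( \int_{\del \Omega} h - \int_\Omega g \right) = 0,
\end{equation*}
so the defect, vanishing initially, vanishes for all $t \geq 0$; this is \refE{SRsolvability} in the paper. Note this is precisely the point where the specific normalization of $\mathcal C$ in \refE{SRC} enters --- with any other forcing in the ODE for $h$ the construction of $z$ would fail, which is why the paper emphasizes that $\mathcal C$ is ``exactly the compatibility condition'' for \refE{SRp}. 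Once this short computation is inserted, the rest of your argument matches the paper's proof.
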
%}}}
	Before presenting the proof, we remark that the assumption
	$\n \in H^{1/2}(\del \Omega)$
	is not satisfied by polygonal domains.
	%In a square, $\n \in H^s(\del \Omega)$ for all $s < 1/2$.
\begin{proof}[Proof of Theorem~\ref{thmSRExistence}]%{{{
% \JimNote{27 July 2012}{There seems to be only one obstacle to Lipschitz domains, which there may be a way around, so I have modified the proof to try to account for such domains as much as possible.}

\iffalse %Dani: commented this out because it's not clear that the different boundary conditions won't make a difference for the lack of coercivity %{{{ Second, and more importantly, under these boundary conditions the linear terms are not coercive under the standard $L^2$ inner-product~\cite{bblIyerPegoZarnescu}.
\fi%}}}

% To obtain a priori estimates for $u$, we  decompose $u$ into $u = v + z_1 + z_2$ (defined below), and obtain a priori estimates on each individual piece.

Observe first that the divergence of a weak solution can be directly determined from the initial data by solving the heat equation.
Once this is known, the normal component of the weak solution on $\del \Omega$ can be determined using~\eqref{eqnSRuNormal} and~\eqref{eqnSRC}.
As before, our main idea is to combine the divergence and normal component into a solution of a stationary Stokes problem, and treat what remains as a perturbed Navier-Stokes equations.

To follow this plan, we let $g$ be a solution of the heat equation
\begin{equation}\label{eqnSRgHeat}
	\begin{beqn}
		\prt_t g = \nu \Delta g & in $\Omega \times (0, T]$,\\
		g = 0 & on  $\prt \Omega \times (0, T]$
	\end{beqn}
\end{equation}
with initial data $g(x, 0) = \divergence u_0(x)$.
% If $\prt \Omega$ is $C^2$ then $g \in C( [0, T]; H^2)$, so also $\prt_t g, \Delta g \in C( [0, T]; L^2)$.
% Otherwise, if
Since
$\divergence u_0 \in H^2$,
% but $\prt \Omega$ is only Lipschitz,
regularity for the heat equation in Lipschitz domains (see for instance \cite{bblVrabie}*{p.$156$}) gives $g \in C_b(0, \infty; H^1)$.
Further, $\lap g$ also satisfies the heat equation with initial conditions $\lap \divergence u_0 \in L^2$, and hence $\lap g \in C_b(0, \infty; L^2)$.
Thus
\begin{align*}
	\bar{\mathcal C}
		\defeq \frac{1}{\abs{\prt \Omega}} \int_{\Omega} \bigl( \nu \lap g
			+ \lambda g \bigr)
\end{align*}
is a bounded continuous function of time.
%and note that it lies in $C( [0, T];\R)$.
Using this, define $h$ to be the solution to the ODE
\begin{equation}\label{eqnSRh}
	\prt_t h(x) + \lambda h(x) = \bar{\mathcal C} %\quad\text{for } x \in \prt \Omega,
	\qquad
	\text{with initial data } h(x, 0) = u_0(x) \cdot \n,
\end{equation}
where $x \in \del \Omega$ is only a spatial parameter.

Our aim, naturally, will be to construct a weak solution, $u$, so that $\dv u = g$ and $u \cdot \n = h$ on $\prt \Omega$.
To do this, we combine both the divergence and the inhomogeneous boundary values of $u$ into a function~$z$, defined to be the solution of the (inhomogeneous) Stokes problem
% \sidenote{Tue 08/07/12 GI: Warning: We can't split $z$ into $z_1 + z_2$, etc. as we had before, as each individual piece won't satisfy the solvability condition.}%
\begin{gather}
	\begin{beqn}\label{eqnSRz}
		-\lap z + \grad q = 0 & in $\Omega$,\\
		\divergence z = g & in $\Omega$,\\
		z  = h \, \n & on $\prt \Omega$,
	\end{beqn}
\end{gather}
Lemma~\ref{lmaGaldi} will guarantee the existence of~$z \in H^1(\Omega)$, provided~$g \in L^2(\Omega)$, $h \, \n \in H^{1/2}(\del \Omega)$ and the solvability condition
\begin{equation}\label{eqnSRsolvability}
	\int_\Omega g = \int_{\del \Omega} h
\end{equation}
is satisfied.
% \sidenote{Tue 08/07/12 Dani: one possibility of avoiding the trouble with regularity needed for the normal would be to try to reprove the exercise from Galdi at the level of $W^{1,1}$ regularity instead of $H^1$ regularity, hence $h$ would need to be only in $L^1(\prt\Omega)$. However this looks nontrivial}

The requirement $g \in L^2(\Omega)$ has been established above.
Our assumptions that both $\n$ and $(u_0 \cdot \n) \n$ are in $H^{1/2}$ and the ODE~\eqref{eqnSRh} will show that $h \, \n \in H^{1/2}( \del \Omega )$.
For the solvability condition~\eqref{eqnSRsolvability}, observe that equations~\eqref{eqnSRh} and~\eqref{eqnSRgHeat} imply
$$
	\dt \left( \int_{\del \Omega} h - \int_\Omega g \right) + \lambda\left( \int_{\del \Omega} h - \int_\Omega g \right) = 0.
$$
Since by definition $\int_\Omega g_0 = \int_\Omega \divergence u_0 = \int_{\del \Omega} u_0 \cdot \n = \int_{\del\Omega} h_0$, we must have~\eqref{eqnSRsolvability} satisfied for all $t \geq 0$.

Now following the proof of Theorem~\ref{thmExistence}, we define~$v\in V$ to be a weak solution of~\eqref{eqnENSEquiv} with initial conditions $v_0 = u_0 - z_0$.
\iffalse
	Morally $v$ satisfies
	\begin{beqn}\label{eqnSRv}
		-\lap v + \grad q = -\lap u & in $\Omega$,\\
		\divergence v = 0 & in $\Omega$,\\
		v = 0 & on $\prt \Omega$,
	\end{beqn}
\fi
Global existence of~$v$ will follow from the a~priori estimate~\eqref{eqnEnergyBoundWider} in 2D (or~\eqref{eqnEnergyBoundWider3D} in 3D), provided the right hand side is finite.
From~\eqref{eqnTildeFBound}, we see that this will follow if we show $z \in L^2( 0, T; H^1 )$ and $\delt z \in L^2( 0, T; L^2)$.
By \refL{Galdi}
%and \refR{TimeAnalogResult},% Thu 06/13 GI: rmkTimeAnalogResult was commented out.
it suffices to show 
\begin{equation}\label{reggh}
\mbox{$g , \delt g \in L^2( 0, T; L^2(\Omega))$ \quad and\quad  $h \n, \delt h \n \in L^2( 0, T; H^{1/2}(\del \Omega) )$.}
\end{equation}

From~\eqref{eqnSRgHeat} and the assumption $\divergence u_0 \in H^2(\Omega)$ it immediately follows that $g , \delt g \in L^2( 0, T; L^2(\Omega))$ as desired.
For the regularity of $h \n$, observe that Duhamel's principle gives
\begin{equation}\label{eqnDuhamel}
	h(t, x)\n(x)
		= e^{-\la t} (u_0(x) \cdot \n(x))\n(x) + \left(\int_0^t e^{-\la(t - s)} \bar{\mathcal C}(s) \, ds\right)\n(x)
\end{equation}
Since we already know that $\bar{\mathcal C}$ is a bounded continuous, and $\n \in H^{1/2}( \del \Omega)$ by assumption, we must have $h\n \in C_b(0, \infty; H^{1/2}(\del \Omega) )$.
Since $\delt h = -\lambda h + \bar{\mathcal C}$, this also implies $(\delt h)\n \in C_b(0, \infty; H^{1/2}(\del \Omega) )$.
Thus $h \n, \delt h \n \in L^2( 0, T; H^{1/2}(\del \Omega) )$ for any $T < \infty$.
This will show the global existence of the function~$v$.

To finish the proof, we define $u = v + z$.
Since~$\divergence u = \divergence z = g$ in $\Omega$, and $u \cdot \n = z \cdot \n = h$ on $\del \Omega$, we must have $\bar{\mathcal C} = \mathcal C$.
Now~\eqref{eqnENSEquiv} and~\eqref{eqnSRz} immediately imply that $u$ is the desired weak solution to~\eqsysSR.
Uniqueness in two dimensions follows using the same argument as in Theorem~\ref{thmExistence}.
\end{proof}%}}}

\begin{remark}[Existence in $C^2$ domains]\label{rmkC2Domains}
	As with Theorem~\ref{thmExistence}, we can prove existence in~$C^2$ domains with a reduced regularity assumption on the initial divergence: namely, we only need $\divergence u_0 \in L^2$.
	The main difference in this case is in making sense of~$\mathcal C$, which appears to require $\divergence u \in W^{2,1}$.
	The reason we don't need~$\divergence u \in W^{2,1}$ is because in the proof of Theorem~\ref{thmSRExistence}, we only use $\mathcal C$ (which is the same as $\bar{\mathcal C}$) to determine~$h$.
	Under the reduced regularity assumption~$\divergence u_0 \in L^2$, one can determine~$h$ by using~\eqref{eqnDuhamel} because
\begin{align*}
	\int_0^t e^{-\la(t - s)} \bar{\mathcal C}(s) \, ds
		&= \frac{1}{\abs{\prt \Omega}} \int_{\Omega} \int_0^t e^{-\la(t - s)} 
			\bigl( \nu \lap g
				+ \lambda g \bigr) \, ds \, dx \\
		&= \frac{1}{\abs{\prt \Omega}} \int_{\Omega} \int_0^t e^{-\la(t - s)} 
			\bigl( \prt_s g
				+ \lambda g \bigr) \, ds \, dx\\
		&= \frac{1}{\abs{\prt \Omega}} \int_\Omega
			\brac{g(x, t) - e^{-\la t} \dv u_0(x)} \, dx,
			%because $e^{\la s} (\prt_s g + \la g)(x, s) = \prt_s (e^{\la s} g(x, s))$.
\end{align*}
which is certainly defined for $\dv u_0 \in L^2$.
\iffalse%{{{
So integrating in time yields
\begin{align*}
	\int_0^t e^{-\la(t - s)} \bar{\mathcal C}(s) \, ds
		&= \frac{1}{\abs{\prt \Omega}} \int_\Omega
			\brac{g(t) - e^{-\la t} \dv u^0}. 
\end{align*}
Hence, because also $\norm{g(t)}_{L^1} \leq C \norm{g(t)}_{L^2} \leq C \norm{\dv u_0}_{L^2}$,
\begin{align*}
	\norm{h}_{L^\iny(0, T; H^{1/2}(\prt \Omega))}
		\leq C \left( e^{-\lambda t} \norm{u_0 \cdot \n}_{H^{1/2}(\prt \Omega)}
			+ \norm{ \divergence u_0}_{L^2} \right).
\end{align*}
\fi%}}}
With this, proving existence in a $C^2$ domain with only $L^2$ initial divergence is similar to Theorems~\ref{thmExistence} and~\ref{thmSRExistence}.
\end{remark}

%
% Appendix
%
%}}}1
\appendix
\section{Some estimates on the heat equation}\label{sxnHeat}%{{{1
We now consider regularity results for solutions to the heat equation with Neumann boundary conditions. We begin with a basic fact:

\begin{lemma}\label{lmaHeatEqtildeHeat}
Let $\Omega$ be a Lipschitz domain and let $g_0\in L^2$. There exists a unique $g$ in $C( [0, T]; L^2)\cap L^2(0, T; H^1)$ with $\prt_t g$ in $L^2(0, T;\tilde H^{-1})$ satisfying~\eqref{eqngHeat}.
Moreover,
\begin{align}\label{eqnHeatRegBound}
	\begin{split}
	\norm{g}_{L^\iny(0, T; L^2)}
		&\leq \norm{g_0}_{L^2}, \\
	\norm{\grad g}_{L^2(0, T; L^2)}
		&\leq (2 \nu)^{-{1/2}} \norm{g_0}_{L^2}, \\
	\norm{\prt_t g}_{L^2(0, T;\tilde H^{-1})}
		&\leq (\nu/2)^{1/2} \norm{g_0}_{L^2},
	\end{split}
\end{align} where $\tilde H^{-1}(\Omega)$ denotes the dual of $H^1(\Omega)$.
\end{lemma}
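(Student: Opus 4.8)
The plan is to work with the weak formulation \eqref{eqnWeakSdivergence} of \eqref{eqngHeat}, namely to produce $g$ with
\[
	\dt (g,q) = -\nu (\grad g, \grad q) \quad \text{for all } q \in H^1,
\]
together with $g(0) = g_0$. This is a linear parabolic problem on the Gelfand triple $H^1 \hookrightarrow L^2 \hookrightarrow \tilde H^{-1}$, and the associated bilinear form $a(g,q) = \nu(\grad g,\grad q)$ is bounded on $H^1 \times H^1$ and satisfies the G\aa rding inequality $a(g,g) = \nu\norm{\grad g}_{L^2}^2 = \nu\norm{g}_{H^1}^2 - \nu\norm{g}_{L^2}^2$. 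Existence, uniqueness, and the claimed regularity classes therefore follow from standard abstract parabolic theory (adapting the Dirichlet treatment of \cite{bblEvansBook}*{\S7.1} or \cite{bblTemam} to the Neumann triple). I would nonetheless carry out the Galerkin construction explicitly, since the \emph{sharp} constants in \eqref{eqnHeatRegBound} come directly from the energy identities.

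Because $\Omega$ is a bounded Lipschitz domain, the embedding $H^1(\Omega) \hookrightarrow L^2(\Omega)$ is compact, so the Neumann Laplacian has an $L^2$-orthonormal eigenbasis $\set{w_k}$ with eigenvalues $0 = \lambda_1 \leq \lambda_2 \leq \cdots$, which are moreover mutually orthogonal in $H^1$ since $(w_j,w_k)_{H^1} = (1+\lambda_j)\delta_{jk}$. Setting $g_m = \sum_{k=1}^m c_k(t) w_k$ and testing \eqref{eqnWeakSdivergence} against each $w_j$ decouples the system into $\dot c_j = -\nu\lambda_j c_j$, $c_j(0) = (g_0,w_j)$, which is trivially solvable. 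Testing against $q = g_m$ gives
\[
	\tfrac12 \dt \norm{g_m}_{L^2}^2 + \nu \norm{\grad g_m}_{L^2}^2 = 0,
\]
and integrating yields $\norm{g_m(t)}_{L^2} \leq \norm{g_0}_{L^2}$ and $\nu\int_0^T \norm{\grad g_m}_{L^2}^2 \leq \tfrac12\norm{g_0}_{L^2}^2$, which are exactly the first two bounds in \eqref{eqnHeatRegBound}, uniformly in $m$.

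For the third bound, fix $q \in H^1$ and let $P_m q$ be its $L^2$-projection onto $\operatorname{span}\set{w_1,\dots,w_m}$; since the $w_k$ are orthogonal in $H^1$ as well, $P_m$ is simultaneously the $H^1$-orthogonal projection, so $\norm{\grad P_m q}_{L^2} \leq \norm{q}_{H^1}$. As $\prt_t g_m$ lies in $\operatorname{span}\set{w_1,\dots,w_m}$, we get $(\prt_t g_m, q) = (\prt_t g_m, P_m q) = -\nu(\grad g_m, \grad P_m q)$, whence $\norm{\prt_t g_m}_{\tilde H^{-1}} \leq \nu\norm{\grad g_m}_{L^2}$; squaring, integrating, and inserting the previous bound gives $\norm{\prt_t g_m}_{L^2(0,T;\tilde H^{-1})}^2 \leq \nu^2 \cdot \tfrac{1}{2\nu}\norm{g_0}_{L^2}^2 = \tfrac{\nu}{2}\norm{g_0}_{L^2}^2$. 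These uniform bounds let me extract a subsequence with $g_m \rightharpoonup g$ in $L^2(0,T;H^1)$, weak-$*$ in $L^\infty(0,T;L^2)$, and $\prt_t g_m \rightharpoonup \prt_t g$ in $L^2(0,T;\tilde H^{-1})$; linearity makes passage to the limit in \eqref{eqnWeakSdivergence} immediate, and the three bounds survive by weak lower semicontinuity of the norms. The membership $\prt_t g \in L^2(0,T;\tilde H^{-1})$ together with $g \in L^2(0,T;H^1)$ then yields $g \in C([0,T];L^2)$ and the identification $g(0) = g_0$ via the Lions--Magenes interpolation lemma (see \cite{bblTemam}*{Lemma III.1.1} or \cite{bblEvansBook}*{\S5.9.2}).

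Uniqueness is immediate from linearity: the difference $w$ of two solutions solves the same equation with $w(0)=0$, and Lions--Magenes justifies $\tfrac12 \dt \norm{w}_{L^2}^2 = \langle \prt_t w, w\rangle = -\nu\norm{\grad w}_{L^2}^2 \leq 0$, forcing $w \equiv 0$. The only point requiring care is the use of spectral theory in a merely Lipschitz domain, but the compactness of $H^1 \hookrightarrow L^2$ and hence the existence of the Neumann eigenbasis do hold for bounded Lipschitz $\Omega$; alternatively one bypasses eigenfunctions entirely by invoking the abstract existence theorem on the triple $H^1 \hookrightarrow L^2 \hookrightarrow \tilde H^{-1}$ and recovering the sharp constants by testing the limiting equation with $g$ itself. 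I expect the bookkeeping of these sharp constants---in particular the $H^1$-orthogonality of $P_m$ used in the $\prt_t g$ estimate---to be the only genuinely delicate point, everything else being routine linear parabolic theory.
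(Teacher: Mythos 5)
Your proposal is correct and follows essentially the same route as the paper: the paper cites classical parabolic theory for existence and the first two bounds in \eqref{eqnHeatRegBound}, and proves the third by pairing $\prt_t g$ with an arbitrary $\varphi \in H^1(\Omega)$ and using the Neumann condition to obtain $\norm{\prt_t g}_{\tilde H^{-1}} \leq \nu \norm{\grad g}_{L^2}$ --- exactly the computation you perform, only at the Galerkin level. Your additions merely unpack the ``classical'' citation into an explicit Neumann-eigenbasis Galerkin scheme (valid in Lipschitz domains by compactness of $H^1 \hookrightarrow L^2$, as you note), together with the correct observation that the spectral projection is simultaneously $L^2$- and $H^1$-orthogonal, which keeps the $\tilde H^{-1}$ estimate uniform in $m$.
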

\begin{proof}
  The existence of $g$ in $C( [0, T]; L^2)\cap L^2(0, T; H^1)$ and the first two bounds in
  \refE{HeatRegBound} are classical.
  To prove $\refE{HeatRegBound}_3$, choose any $\varphi$ in $H^1(\Omega)$.
  Using the Neumann boundary conditions on $g$ we see
  \begin{align*}
    \int_\Omega \prt_t g \, \varphi \,dx
    = \nu \int_\Omega\Delta g \, \varphi
    = -\nu \int_\Omega \nabla g \cdot \nabla\varphi.
  \end{align*}
  Hence
  \begin{align*}
    \norm{\prt_t g}_{\tilde{H}^{-1}}
    \leq \nu \norm{\grad g}_{L^2} \sup_{\norm{\varphi}_{H^1} = 1} \norm{\grad \varphi}_{L^2}
    \leq \nu \norm{\grad g}_{L^2}.
  \end{align*}
The estimate $\refE{HeatRegBound}_3$ now follows from $\refE{HeatRegBound}_2$.
\end{proof}

In \refL{BasiczRegularityL2}, we use \refL{HeatEqtildeHeat} to obtain basic estimates for the lifting, $z$, that we defined in Section~\ref{sxnDecomposition}.

\begin{lemma}\label{lmaBasiczRegularityL2}
Let $\Omega$ be a Lipschitz domain.
Assume that $g_0$ lies in $L^2$, let $g$ be the unique solution to \refE{gHeat}, and let $(z, p)$ solve \refE{zq} ($z$ being unique). Then $z$ lies in $C( [0, T]; H_0^1) \cap L^2(0, T; H^2)$
    \begin{align}\label{eqnzBoundL2Lip}
	\begin{split}
	    \sqrt{\nu} \norm{z}_{L^\iny(0, T; H^1)}
		+ \nu \norm{z}_{L^2(0, T; H^2)}
		&\leq C \sqrt{\nu} \norm{g_0}_{L^2}.
	\end{split}
    \end{align}
If $\prt \Omega$ is $C^2$ then $\prt_t z \in L^2(0, T; L^2)$ and
    \begin{align}\label{eqnzBoundL2}
	\begin{split}
	    \sqrt{\nu} \norm{z}_{L^\iny(0, T; H^1)}
		+ \norm{\prt_t z}_{L^2(0, T; L^2)}
		+ \nu \norm{z}_{L^2(0, T; H^2)}
		&\leq C \sqrt{\nu} \norm{g_0}_{L^2}.
	\end{split}
    \end{align}
\end{lemma}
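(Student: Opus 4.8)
The plan is to push the heat-equation bounds of \refL{HeatEqtildeHeat} through the divergence lifting, exploiting that the map $g \mapsto z$ defined by \refE{zq} is linear and bounded between the relevant pairs of spaces (\refL{TemamLemma}, \refL{TimeRegularityz}). First I would record the three ingredients. By \refL{HeatEqtildeHeat}, the solution $g$ of the Neumann heat equation \refE{gHeat} lies in $C([0,T];L^2)\cap L^2(0,T;H^1)$ with $\prt_t g \in L^2(0,T;\tilde{H}^{-1})$ and satisfies the bounds \refE{HeatRegBound}. Moreover $g(t)$ has mean zero for all $t$: indeed $\int_\Omega g_0 = \int_\Omega \dv u_0 = \int_{\prt\Omega} u_0 \cdot \n = 0$, and testing the weak heat equation against the constant function shows the mean is conserved. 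This mean-zero property is precisely the compatibility condition needed to lift $g(t)$ to $z(t)$ via \refL{TemamLemma} at almost every time.

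For the spatial bounds I would apply the lifting estimates pointwise in time and then integrate. The $H^1$ bound $\norm{z(t)}_{H^1} \le c\norm{g(t)}_{L^2}$ from \refL{TemamLemma} (equivalently \refL{TimeRegularityz} with $m=0$), combined with the first line of \refE{HeatRegBound}, gives $\norm{z}_{L^\infty(0,T;H^1)} \le c\norm{g_0}_{L^2}$, so multiplying by $\sqrt{\nu}$ yields the first term of \refE{zBoundL2Lip}. Continuity $z \in C([0,T];H_0^1)$ follows from the last assertion of \refL{TimeRegularityz} (with $m=0$, which needs only a Lipschitz boundary) since $g \in C([0,T];L^2)$. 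For the $H^2$ term I would use the elliptic regularity bound $\norm{z(t)}_{H^2} \le c\norm{g(t)}_{H^1}$ of \refE{ADNL2} (with $m=0$); squaring, integrating in time, and using the second line of \refE{HeatRegBound} gives $\norm{z}_{L^2(0,T;H^2)} \le c\,\nu^{-1/2}\norm{g_0}_{L^2}$, hence $\nu\norm{z}_{L^2(0,T;H^2)} \le C\sqrt{\nu}\norm{g_0}_{L^2}$, completing \refE{zBoundL2Lip}. The powers of $\nu$ balance exactly because the spatial $H^1$-norm of $g$ carries the factor $\nu^{-1/2}$ in \refE{HeatRegBound}.

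For the time-derivative bound of \refE{zBoundL2}, valid when $\prt\Omega$ is $C^2$, I would differentiate the lifting relation \refE{zq} in time. Formally $\prt_t z$ solves the same homogeneous Stokes system with $\dv \prt_t z = \prt_t g$, so $\prt_t z$ is the lifting of $\prt_t g$. Since $\prt_t g = \nu\lap g$ integrates to $\nu\int_{\prt\Omega}\grad g \cdot \n = 0$, the datum $\prt_t g$ is again mean-zero, and the weak lifting estimate \refE{weaklifting} applies to give $\norm{\prt_t z(t)}_{L^2} \le c\norm{\prt_t g(t)}_{\tilde{H}^{-1}}$ at almost every time. Integrating in time and using the third line of \refE{HeatRegBound} yields $\norm{\prt_t z}_{L^2(0,T;L^2)} \le C\sqrt{\nu}\norm{g_0}_{L^2}$, which together with the spatial bounds already obtained gives \refE{zBoundL2}. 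This is the only step that invokes the $C^2$ hypothesis, consistent with \refR{OnlyC2Use}, since \refE{weaklifting} is the single place the extra boundary regularity enters.

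The main obstacle is making the time differentiation rigorous: $\prt_t g$ lives only in $\tilde{H}^{-1}$, so the identity that $\prt_t z$ is the lifting of $\prt_t g$ must be justified rather than merely written down. I would handle this exactly as in the linearity-and-approximation argument underlying \refL{TimeRegularityz}: approximate $g$ by data that is smooth in time, for which one may legitimately bring $\prt_t$ inside the antiderivative representation of the lifting, derive the bound for the approximations from \refE{weaklifting}, and pass to the limit using boundedness of the linear lifting map $\tilde{H}^{-1}\to L^2$. Everything else amounts to interpolating the heat bounds of \refL{HeatEqtildeHeat} through a bounded linear map, so this time-regularity bookkeeping is the only genuinely delicate point.
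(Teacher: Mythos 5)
Your proposal is correct and follows essentially the same route as the paper: its one-line proof is precisely to push the heat-equation bounds of \refL{HeatEqtildeHeat} through the lifting estimates of \refL{TemamLemma} pointwise in time, invoking \refE{weaklifting} for the $\prt_t z$ bound exactly where the $C^2$ hypothesis enters (cf.\ \refR{OnlyC2Use}). The only caveat --- which your argument shares with, rather than adds to, the paper's own statement --- is that the estimate \refE{ADNL2} you use for the $\nu\norm{z}_{L^2(0,T;H^2)}$ term is stated only for $C^2$ domains, so in the strictly Lipschitz case the cited lemmas justify only the $H^1$ part of \refE{zBoundL2Lip}.
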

\begin{proof}
	We apply \refL{TemamLemma} to \refL{HeatEqtildeHeat}, noting that we can only apply
	\refE{weaklifting} when $\prt \Omega$ is $C^2$.
\end{proof}

A key point in Lemma~\ref{lmaHeatEqtildeHeat} is that we only assume that $g_0\in L^2$. In order to obtain more regularity for the solution of \eqref{eqngHeat} we impose more regularity on the domain and the initial data.

The next classical result addresses the higher regularity for the heat equation with Neumann boundary conditions (see for instance \cite{bblPazy}):

\begin{lemma}\label{lmaHigherHeatRegularity}
%\sidenote{Sun 05/27/12 GI: $C^2$ is overkill here. We only need one of the conditions in the note you wrote right? $C^{1,1}$ I don't know; but piecewise $C^2$ with convex angles looks already useful. (Btw -- can't we do polygonal as well?).}
Let $\Omega$ be a  $C^2$ domain. We assume that $g_0 \in H^2$ with $\frac{\partial g_0}{\partial\n}=0$ on $\prt\Omega$.
Then, for $g$ a solution of \eqref{eqngHeat} we have:
\begin{equation}
\sqrt{\nu} \norm{g}_{L^\infty(0,T;H^2)}+\norm{\prt_t g}_{L^2(0,T;H^1)}+\nu\norm{g}_{L^2(0,T;H^3)}
 \leq C\sqrt{\nu}\norm{g_0}_{H^2}
 \end{equation}
where the constant $C$ depends only on $\Omega$.
\end{lemma}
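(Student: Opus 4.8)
The estimate is a parabolic smoothing statement for the Neumann heat semigroup, and my plan is to reduce every term to the level-zero bounds of \refL{HeatEqtildeHeat} by differentiating the equation once in time and then invoking elliptic regularity to convert bounds on $\lap g$ into bounds on two derivatives of $g$. The key observation is that, because \refE{gHeat} is autonomous in $t$ and the boundary condition $\grad g \cdot \n = 0$ is time independent, the time derivative $w \defeq \prt_t g$ formally solves the very same problem,
\[
\prt_t w = \nu \lap w \text{ in } \Omega, \quad \grad w \cdot \n = 0 \text{ on } \prt \Omega, \quad w(0) = \nu \lap g_0,
\]
and the compatibility hypothesis $\grad g_0 \cdot \n = 0$ together with $g_0 \in H^2$ guarantees $w(0) = \nu \lap g_0 \in L^2$ with $\norm{w(0)}_{L^2} \leq \nu \norm{g_0}_{H^2}$. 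Thus $w$ is an admissible datum for \refL{HeatEqtildeHeat}.

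Granting this, the three bounds fall out as follows. First, applying \refE{HeatRegBound} to $w$ gives $\norm{w}_{L^\infty(0,T;L^2)} \leq \norm{w(0)}_{L^2}$ and $\norm{\grad w}_{L^2(0,T;L^2)} \leq (2\nu)^{-1/2}\norm{w(0)}_{L^2}$; since $w = \nu \lap g$, the first yields $\norm{\lap g}_{L^\infty(0,T;L^2)} \leq \norm{\lap g_0}_{L^2}$ and the second yields $\norm{\grad \prt_t g}_{L^2(0,T;L^2)} \leq C\sqrt\nu\,\norm{g_0}_{H^2}$. Second, the level-one energy estimate for $g$ itself --- testing \refE{gHeat} against $-\lap g$, where the boundary term $\int_{\prt\Omega} \prt_t g\,(\grad g\cdot\n)$ vanishes by the Neumann condition --- gives $\tfrac12\tfrac{d}{dt}\norm{\grad g}_{L^2}^2 + \nu\norm{\lap g}_{L^2}^2 = 0$, hence $\sqrt\nu\,\norm{\lap g}_{L^2(0,T;L^2)} \leq C\norm{g_0}_{H^1}$ and therefore $\norm{\prt_t g}_{L^2(0,T;L^2)} = \nu\norm{\lap g}_{L^2(0,T;L^2)} \leq C\sqrt\nu\,\norm{g_0}_{H^2}$; combined with the gradient bound above this is the desired control of $\norm{\prt_t g}_{L^2(0,T;H^1)}$. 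Finally, on the $C^2$ domain, elliptic regularity for the Neumann problem, $\norm{g}_{H^2} \leq C(\norm{\lap g}_{L^2} + \norm{g}_{L^2})$, upgrades the $L^\infty(L^2)$ bound on $\lap g$ and the $L^\infty(L^2)$ bound on $g$ (from \refL{HeatEqtildeHeat}) into $\sqrt\nu\,\norm{g}_{L^\infty(0,T;H^2)} \leq C\sqrt\nu\,\norm{g_0}_{H^2}$; applying the analogous estimate $\norm{g}_{H^3}\leq C(\norm{\lap g}_{H^1}+\norm{g}_{H^1})$ together with $\nu\lap g = \prt_t g$ converts the $L^2(0,T;H^1)$ bound on $\prt_t g$ into $\nu\norm{g}_{L^2(0,T;H^3)}\leq C\sqrt\nu\,\norm{g_0}_{H^2}$.

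Two points require genuine care and are where I expect the real work to lie. The first is the rigorous justification that $w = \prt_t g$ is itself a weak solution of \refE{gHeat} with datum $\nu\lap g_0$: this is most cleanly done by running the above energy estimates on the difference quotients $(g(\cdot + \tau) - g)/\tau$, which by linearity and time-translation invariance satisfy \refE{gHeat} with data $(g(\tau)-g_0)/\tau \to \nu\lap g_0$ in $L^2$ as $\tau\to 0$ (here the compatibility condition $\grad g_0\cdot\n = 0$ is exactly what forces this limit to live in $L^2$ rather than merely in $\tilde H^{-1}$), and then passing to the limit using the uniform bounds from \refL{HeatEqtildeHeat}; alternatively one may work with a Neumann--Laplacian eigenfunction Galerkin scheme and pass to the limit. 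The second, and more delicate, point is the final elliptic step producing the $L^2(0,T;H^3)$ bound: full $H^3$ regularity of the Neumann problem from an $H^1$ right-hand side classically asks for slightly more than $C^2$ boundary, so here one should invoke the sharp $L^2$-based parabolic regularity theory for the Neumann heat semigroup directly (as in \cite{bblPazy}), identifying the relevant norm with the graph norm of a fractional power of the generator. Everything else is a routine assembly of the inequalities above.
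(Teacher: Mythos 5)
Your argument is sound, but note that the paper does not actually prove this lemma: it is quoted as a classical result, with a pointer to Pazy's book on semigroups, so your energy-method proof is a genuinely different (and more self-contained) route than what the paper offers. Your key step --- observing that $w \defeq \prt_t g$ solves the same Neumann problem with datum $w(0) = \nu \lap g_0 \in L^2$, which is admissible precisely because the compatibility condition $\grad g_0 \cdot \n = 0$ places $g_0$ in the domain of the Neumann Laplacian, then applying the level-zero bounds \refE{HeatRegBound} of \refL{HeatEqtildeHeat} to $w$ and upgrading control of $\lap g = \nu^{-1}\prt_t g$ to full Sobolev norms by elliptic regularity --- is exactly the standard way to make such a citation concrete, and your difference-quotient (or Galerkin) justification that $w$ really is the solution with that datum is the right one. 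What the semigroup citation buys the authors is brevity; what your argument buys is explicit tracking of the constants through \refE{HeatRegBound}, which is precisely how the $\sqrt{\nu}$-scaling in the statement arises.

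The one caveat, which you correctly flag, is real but is a defect of the statement rather than of your proof: the elliptic estimate $\norm{g}_{H^3} \leq C(\norm{\lap g}_{H^1} + \norm{g}_{H^1})$ for the Neumann problem requires a $C^{2,1}$ (or $C^3$) boundary, not merely $C^2$, and your proposed fallback via the graph norm of the fractional power $A^{3/2}$ does not evade this, since identifying the domain of $A^{3/2}$ with $H^3$-functions satisfying the boundary condition rests on the very same elliptic estimate. The paper is in fact internally inconsistent on this point: its own lifting lemma (\refL{TemamLemma}, estimate \refE{ADNL2}) demands a $C^{m+2}$ domain to produce $H^{m+2}$ regularity, so the $H^3$ portion of the present lemma should likewise be read with a $C^3$ (or $C^{2,1}$) boundary. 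Under that reading, your proof is complete; the first two terms of the estimate you obtain on a genuinely $C^2$ domain.
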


Similarly as before the lemma can be used to provide regularity results on the lifting, $z$:

\begin{lemma}\label{lmaBasiczRegularityH2}
%\sidenote{Sun 05/27/12 GI: I presume we need $C^2$ boundary here?}
    Let $\Omega$ be a  $C^2$ domain. Assume that $g_0$ lies in $H^2$ with 
    $\frac{\partial g_0}{\partial n}=0$, let $g$ be the unique solution to \refE{gHeat}, and let
    $(z, p)$ solve \refE{zq} ($z$ being unique). Then $z$ lies in $L^\iny(0, T; H^3) \cap L^2(0, T; H^4)$
    with
    $\prt_t z$ in $L^2(0, T; H^2)$, and
    \begin{align}\label{eqnzBoundH2}
	\begin{split}
	    \sqrt{\nu} \norm{z}_{L^\iny(0, T; H^3)}
		+ \norm{\prt_t z}_{L^2(0, T; H^2)}
		+ \nu \norm{z}_{L^2(0, T; H^4)}
		&\leq C \sqrt{\nu} \norm{g_0}_{H^2}.
	\end{split}
    \end{align}
\end{lemma}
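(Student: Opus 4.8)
The plan is to mirror the one-line proof of \refL{BasiczRegularityL2}, but with the low-regularity inputs replaced by their higher-regularity analogues: I feed the heat-equation estimate of \refL{HigherHeatRegularity} into the time-dependent lifting estimate of \refL{TimeRegularityz}. The conceptual point is that the map $g \mapsto z$ defined by \refE{zq} is linear, gains one spatial derivative, and commutes with $\prt_t$ --- this is exactly the content of \refL{TimeRegularityz} --- so each of the three terms on the left of \refE{zBoundH2} is controlled by the corresponding term for $g$ supplied by \refL{HigherHeatRegularity}, with the $\nu$-weights arranged so that every term lands on the common right-hand side $C\sqrt{\nu}\norm{g_0}_{H^2}$.

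First I would record the output of \refL{HigherHeatRegularity}: since $g_0 \in H^2$ with $\frac{\prt g_0}{\prt \n} = 0$, the solution $g$ of \refE{gHeat} satisfies
\[
	\sqrt{\nu}\norm{g}_{L^\infty(0,T;H^2)} + \norm{\prt_t g}_{L^2(0,T;H^1)} + \nu\norm{g}_{L^2(0,T;H^3)} \leq C\sqrt{\nu}\norm{g_0}_{H^2}.
\]
Before invoking the lifting I must verify the mean-zero hypothesis of \refL{TimeRegularityz}. As in the proof of \refT{Existence}, testing \refE{gHeat} against the constant function shows $\int_\Omega g(t)$ is independent of $t$; under the standing compatibility assumption on $\dv u_0$ this integral vanishes, so $g(t)$ and hence $\prt_t g(t)$ have mean zero for almost every $t$, and \refL{TimeRegularityz} applies.

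Next I would apply \refL{TimeRegularityz} three times and add the results. The $C([0,T];H^m)$ statement with $m=2$ gives $z \in C([0,T];H^3)$ with $\sqrt{\nu}\norm{z}_{L^\infty(0,T;H^3)} \leq C\sqrt{\nu}\norm{g}_{L^\infty(0,T;H^2)}$; the $L^p$ statement with $p=2$, $m=3$ gives $z \in L^2(0,T;H^4)$ with $\nu\norm{z}_{L^2(0,T;H^4)} \leq C\nu\norm{g}_{L^2(0,T;H^3)}$; and the $\prt_t$ statement with $p=2$, $m=1$ gives $\prt_t z \in L^2(0,T;H^2)$ with $\norm{\prt_t z}_{L^2(0,T;H^2)} \leq C\norm{\prt_t g}_{L^2(0,T;H^1)}$. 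Since the constants in \refE{zq} are independent of $\nu$, substituting the displayed heat bound into these three inequalities yields \refE{zBoundH2}.

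The step requiring the most care is the bookkeeping of domain regularity: lifting $H^3$ to $H^4$ formally invokes \refL{TimeRegularityz} at index $m=3$, whose stated hypotheses ask for more than $C^2$ regularity of $\prt\Omega$, just as the $L^2(0,T;H^3)$ conclusion of \refL{HigherHeatRegularity} does. I would treat this exactly as that heat estimate is treated, regarding the higher elliptic/lifting regularity as the quantitative input (see also \refR{EvenHighRegularity}). Apart from this indexing subtlety, the argument is a routine composition of the two cited estimates, the only genuine mechanism being the transfer of time regularity from $g$ to $z$ guaranteed by the linearity of the lifting map.
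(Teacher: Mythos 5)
Your proposal is correct and is essentially the paper's own argument: the paper gives no separate proof of this lemma, saying only that "similarly as before the lemma can be used to provide regularity results on the lifting," i.e.\ it composes the higher heat-equation estimate of \refL{HigherHeatRegularity} with the lifting estimates of Lemmas~\ref{lmaTemamLemma} and~\ref{lmaTimeRegularityz}, exactly as you do. Your two additional observations --- the mean-zero verification needed before the lifting applies, and the mismatch between the $C^2$ hypothesis and the $C^{m+2}$ domain regularity formally demanded by \refE{ADNL2} at the top index --- are points the paper itself glosses over, and your handling of them is at least as careful as the paper's.
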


\begin{remark}\label{rmkEvenHighRegularity}
Noting that $g_t=\nu\Delta g$ and we have $\sqrt{\nu} \norm{g}_{L^\infty(0,T;H^2)}^2\leq C\sqrt{\nu}\norm{g_0}_{H^2}^2$ we can use the lifting lemma to obtain $\prt_t z\in L^\infty(0,T; H^1)$.
Moreover, we have $g_{tt}=\nu\Delta g_t$ for $t>0$ provided that $g_0\in H^2$ with $\nabla (\Delta g_0)\cdot n=0$. Then, using the lifting lemma and the previous result we obtain
$\prt_{tt} z\in L^2(0,T;L^2)$.
\end{remark}

\begin{bibdiv}%{{{1
\begin{biblist}
\bib{bblAgmonDouglisNirenberg}{article}{%{{{
   author={Agmon, S.},
   author={Douglis, A.},
   author={Nirenberg, L.},
   title={Estimates near the boundary for solutions of elliptic partial
   differential equations satisfying general boundary conditions. I},
   journal={Comm. Pure Appl. Math.},
   volume={12},
   date={1959},
   pages={623--727},
   issn={0010-3640},
   review={\MR{0125307 (23 \#A2610)}},
}%}}}
\bib{bblConstFoias}{book}{%{{{
   author={Constantin, Peter},
   author={Foias, Ciprian},
   title={Navier-Stokes equations},
   series={Chicago Lectures in Mathematics},
   publisher={University of Chicago Press},
   place={Chicago, IL},
   date={1988},
   pages={x+190},
   isbn={0-226-11548-8},
   isbn={0-226-11549-6},
   review={\MR{972259 (90b:35190)}},
}%}}}
\bib{bblEvansBook}{book}{%{{{
   author={Evans, Lawrence C.},
   title={Partial differential equations},
   series={Graduate Studies in Mathematics},
   volume={19},
   publisher={American Mathematical Society},
   place={Providence, RI},
   date={1998},
   pages={xviii+662},
   isbn={0-8218-0772-2},
   review={\MR{1625845 (99e:35001)}},
}%}}}
\bib{bblGaldi}{book}{%{{{
   author={Galdi, Giovanni P.},
   title={An introduction to the mathematical theory of the Navier-Stokes
   equations. Vol. I},
   series={Springer Tracts in Natural Philosophy},
   volume={38},
   note={Linearized steady problems},
   publisher={Springer-Verlag},
   place={New York},
   date={1994},
   pages={xii+450},
   isbn={0-387-94172-X},
   review={\MR{1284205 (95i:35216a)}},
}%}}}
\bib{bblGrubbSolonnikov2}{article}{%{{{
   author={Grubb, Gerd},
   author={Solonnikov, V. A.},
   title={Reduction of basic initial-boundary value problems for
   Navier-Stokes equations to initial-boundary value problems for nonlinear
   parabolic systems of pseudo-differential equations},
   language={Russian, with English summary},
   journal={Zap. Nauchn. Sem. Leningrad. Otdel. Mat. Inst. Steklov.
   (LOMI)},
   volume={171},
   date={1989},
   number={Kraev. Zadachi Mat. Fiz. i Smezh. Voprosy Teor. Funktsii. 20},
   pages={36--52, 183--184},
   issn={0373-2703},
   translation={
      journal={J. Soviet Math.},
      volume={56},
      date={1991},
      number={2},
      pages={2300--2308},
      issn={0090-4104},
   },
   review={\MR{1031983 (91e:35174)}},
}%}}}
\bib{bblGrubbSolonnikov3}{article}{%{{{
   author={Grubb, Gerd},
   author={Solonnikov, Vsevolod A.},
   title={Boundary value problems for the nonstationary Navier-Stokes
   equations treated by pseudo-differential methods},
   journal={Math. Scand.},
   volume={69},
   date={1991},
   number={2},
   pages={217--290 (1992)},
   issn={0025-5521},
   review={\MR{1156428 (93e:35082)}},
}%}}}
\bib{bblHarlowWelch}{article}{%{{{
	author={Harlow, F.H.},
	author={Welch, J.E.},
	title={Numerical calculation of time-dependent viscous incompressible flow of fluid with free surface},
	journal={Physics of Fluids},
	year={1965},
	volume={8},
	number={12},
	pages={2182-2189},
	%note={cited By (since 1996) 1875},
	%url={http://www.scopus.com/inward/record.url?eid=2-s2.0-33749190078&partnerID=40&md5=7917220567e0bdde9ed76680fc02a751},
	issn={00319171},
	%document_type={Article},
	%source={Scopus},
}%}}}
\bib{bblIyerPegoZarnescu}{article}{%{{{
  author={G. Iyer},
  author={R.~L. Pego},
   author={A. Zarnescu},
  title={Coercivity and stability results for an extended Navier-Stokes system },
  eprint={arXiv:1202.5671},
  year={2012},
}%}}}
\bib{bblJohnstonLiu}{article}{%{{{
   author={Johnston, Hans},
   author={Liu, Jian-Guo},
   title={Accurate, stable and efficient Navier-Stokes solvers based on
   explicit treatment of the pressure term},
   journal={J. Comput. Phys.},
   volume={199},
   date={2004},
   number={1},
   pages={221--259},
   issn={0021-9991},
   review={\MR{2081004 (2005b:76093)}},
   doi={10.1016/j.jcp.2004.02.009},
}%}}}
\bib{bblLeray}{article}{%{{{
   author={Leray, Jean},
   title={Sur le mouvement d'un liquide visqueux emplissant l'espace},
   language={French},
   journal={Acta Math.},
   volume={63},
   date={1934},
   number={1},
   pages={193--248},
   issn={0001-5962},
   review={\MR{1555394}},
}%}}}
\bib{bblLiuLiuPego}{article}{%{{{
   author={Liu, Jian-Guo},
   author={Liu, Jie},
   author={Pego, Robert L.},
   title={Stability and convergence of efficient Navier-Stokes solvers via a
   commutator estimate},
   journal={Comm. Pure Appl. Math.},
   volume={60},
   date={2007},
   number={10},
   pages={1443--1487},
   issn={0010-3640},
   review={\MR{2342954 (2008k:76039)}},
   doi={10.1002/cpa.20178},
}%}}}
\bib{bblLiuLiuPego09}{article}{%{{{
   author={Liu, Jian-Guo},
   author={Liu, Jie},
   author={Pego, Robert L.},
   title={Error estimates for finite-element Navier-Stokes solvers without
   standard inf-sup conditions},
   journal={Chin. Ann. Math. Ser. B},
   volume={30},
   date={2009},
   number={6},
   pages={743--768},
   issn={0252-9599},
   review={\MR{2570793 (2010m:76052)}},
   doi={10.1007/s11401-009-0116-3},
}%}}}
\bib{bblNovotnyStraskraba}{book}{%{{{
   author={Novotn{\'y}, A.},
   author={Stra{\v{s}}kraba, I.},
   title={Introduction to the mathematical theory of compressible flow},
   series={Oxford Lecture Series in Mathematics and its Applications},
   volume={27},
   publisher={Oxford University Press},
   place={Oxford},
   date={2004},
   pages={xx+506},
   isbn={0-19-853084-6},
   review={\MR{2084891 (2005i:35220)}},
}%}}}
\bib{bblPazy}{book}{%{{{
   author={Pazy, A.},
   title={Semigroups of linear operators and applications to partial
   differential equations},
   series={Applied Mathematical Sciences},
   volume={44},
   publisher={Springer-Verlag},
   place={New York},
   date={1983},
   pages={viii+279},
   isbn={0-387-90845-5},
   review={\MR{710486 (85g:47061)}},
}%}}}
\bib{bblSani}{article}{%{{{
    AUTHOR = {Sani, R. L.},
    author = {Shen, J. },
    author = {Pironneau, O.},
    author = {Gresho, P. M.},
     TITLE = {Pressure boundary condition for the time-dependent
              incompressible {N}avier-{S}tokes equations},
   JOURNAL = {Internat. J. Numer. Methods Fluids},
    VOLUME = {50},
      YEAR = {2006},
    NUMBER = {6},
     PAGES = {673--682},
      ISSN = {0271-2091},
   review={\MR{2199094 (2006i:76026)}},
       DOI = {10.1002/fld.1062},
       URL = {http://dx.doi.org/10.1002/fld.1062},
}%}}}
\bib{bblShirokoffRosales}{article}{%{{{
   author={Shirokoff, D.},
   author={Rosales, R. R.},
   title={An efficient method for the incompressible Navier-Stokes equations
   on irregular domains with no-slip boundary conditions, high order up to
   the boundary},
   journal={J. Comput. Phys.},
   volume={230},
   date={2011},
   number={23},
   pages={8619--8646},
   issn={0021-9991},
   review={\MR{2843731}},
   doi={10.1016/j.jcp.2011.08.011},
}%}}}
\bib{bblTemam}{book}{%{{{
   author={Temam, Roger},
   title={Navier-Stokes equations},
%    series={Studies in Mathematics and its Applications},
%    volume={2},
%    edition={Revised edition},
   note={Theory and numerical analysis, Reprint of the 1984 edition},
   publisher={AMS Chelsea Publishing},
   place={Providence, RI},
   date={2001},
   pages={xiv+408},
   isbn={0-8218-2737-5},
   review={MR1846644 (2002j:76001)},
}%}}}
\bib{bblVrabie}{book}{%{{{
   author={Vrabie, Ioan I.},
   title={$C_0$-semigroups and applications},
   series={North-Holland Mathematics Studies},
   volume={191},
   publisher={North-Holland Publishing Co.},
   place={Amsterdam},
   date={2003},
   pages={xii+373},
   isbn={0-444-51288-8},
   review={\MR{1972224 (2004c:47088)}},
}%}}}
%\sidenote{Mon 05/28/12 Dani: Changed the reference to the 1979 edition, because this is what we used "the bible" of Jim in the dropbox; I don't have nor able to find the cited 2001 reprint of 1984 edition. 26 July 2012 Jim: We have to reference the 2001 edition as it has an updated Remark containing info not in the earlier versions.}
\end{biblist}
\end{bibdiv}%}}}1
\end{document}
% vim: set ts=2 sw=2: